\newtheorem{theorem}{Theorem}[section]
\newtheorem{lemma}[theorem]{Lemma}
\newtheorem{proposition}[theorem]{Proposition}
\theoremstyle{definition}
\newtheorem{definition}[theorem]{Definition}
\theoremstyle{remark}
\newtheorem{remark}[theorem]{Remark}
\numberwithin{equation}{section}
\begin{document}
	\author{Stefan Ivkovi\'{c} }
	
	\vspace{15pt}
	
	\title{On New Approach to Semi- Fredholm theory in unital $ C^{ *} $ -algebras }

	\maketitle
	\begin{abstract}
		Axiomatic Fredholm theory in unital $ C^{ *} $ -algebras was established by Ke\v{c}ki\'{c} and Lazovi\'{c} in \cite{KL}. Following the pure algebraic approach  by Ke\v{c}ki\'{c} and Lazovi\'{c}, in \cite{BJMA} we extended further this theory to axiomatic semi-Fredholm and semi-Weyl theory in unital $ C^{ *} $ -algebras. However, recently, in \cite{IS10} we developed another approach to axiomatic Fredholm theory in unital $ C^{ *} $ -algebras which is based on the theory of Hilbert modules and which is equivalent to the algebraic approach by Ke\v{c}ki\'{c} and Lazovi\'{c}. In this paper, we extend further this new Hilbert- module approach from Fredholm theory  to semi-Fredholm and semi-Weyl theory in unital $ C^{ *} $ -algebras. Hence, we provide new proofs to the results in \cite{BJMA}. 
	\end{abstract}
	
	\vspace{15pt}
	
	\begin{flushleft}
		\textbf{Keywords} Semi-Fredholm operator, semi-Weyl operator, index, K-group, Hilbert module 
	\end{flushleft} 
	
	\vspace{15pt}
	
	\begin{flushleft}
		\textbf{Mathematics Subject Classification (2010)} Primary MSC 47A53; Secondary MSC 46L08.
	\end{flushleft}
	
	\vspace{30pt}
\section{Introduction}
The Fredholm and semi-Fredholm theory on Hilbert and Banach spaces started by studying the integral equations introduced in the pioneering work by Fredholm in 1903 in \cite{F}. After that, the abstract theory of Fredholm and semi-Fredholm operators on Hilbert and Banach spaces was further developed in numerous papers and books such as \cite{AP1}, \cite{AP2}. 
In addition to classical semi-Fredholm theory on Hilbert and Banach spaces, several generalizations of this theory have been considered. Breuer for example started the development of Fredholm theory in von-Neumann algebras as a generalization of the classical Fredholm theory for operators on Hilbert spaces. In \cite{BR} and \cite{BR2} he introduced the notion of a Fredholm operator in a von Neumann algebra and established its main properties.  On the other hand, Fredholm theory on Hilbert $C^{*}$-modules as another generalization of the classical Fredholm theory on Hilbert spaces was started by Mishchenko and Fomenko. In \cite{MF} they introduced the notion of a Fredholm operator on the standard Hilbert  $C^{*}$-module and proved a generalization in this setting of some of the main results from the classical Fredholm theory. \\ 
The interest for considering these generalizations comes from the theory of pseudo differential operators acting on manifolds. The classical theory can be applied in the case of compact manifolds, but not in the case of non-compact ones. Even operators on Euclidian spaces are hard to study, for example Laplacian is not Fredholm. Kernels and cokernels of many operators are infinite dimensional Banach spaces, however, they may also at the same time be finitely generated Hilbert modules over some appropriate $C^{*}$-algebra. Similarly, orthogonal projections onto kernels and cokernels of many bounded linear operators on Hilbert spaces are not finite rank projections in the classical sense, but they are still finite projections in an appropriate von Neumann algebra. Therefore, many operators that are not semi-Fredholm in the classical sense may become semi-Fredholm in a more general sense if we consider them as operators on Hilbert $C^{*}$-modules or as elements of an appropriate von Neumann algebra. Hence, by studying these generalized semi-Fredholm operators, we get a proper extension of the classical semi-Fredholm theory to new classes of operators.

Now, Ke\v{c}ki\'{c} and Lazovi\'{c} in \cite{KL} established an axiomatic approach to Fredholm theory. They introduced the notion of a finite type element in a unital $C^{*}$-algebra which generalizes the notion of the compact operator on the standard Hilbert $C^{*}$-module and the notion of a finite operator in a properly infinite von Neumann algebra. They also introduced the notion of a Fredholm type element with respect to the ideal of these finite type elements. This notion is at a same time a generalization of the classical Fredholm operator on a Hilbert space, Fredholm $C^{*}$-operator on the standard Hilbert $C^{*}$-module defined by Mishchenko and Fomenko and the Fredholm operator on a properly infinite von Neumann algebra defined by Breuer. The index of this Fredholm type element takes values in the K-group. They showed that the set of Fredholm type elements in a unital $C^{*}$-algebra is open in the norm topology and they proved a generalization of the Atkinson theorem. Moreover, they proved the multiplicativity of the index in the K-group and that the index is invariant under perturbations of Fredholm type elements by finite type elements.\\
 In \cite{BJMA} we went further in this direction and defined semi-Fredholm and semi-Weyl type elements in a unital $C^{*}$-algebra. We investigated and proved several properties of these elements as a generalization of the results from the classical semi-Fredholm and semi-Weyl theory on Hilbert and Banach spaces.
 
 Recently, in \cite{IS10}, we introduced a new approach to axiomatic Fredholm theory in unital $C^{*}$-algebras and we proved that this approach is in fact equivalent to the above mentioned approach developed by Ke\v{c}ki\'{c} and Lazovi\'{c}. In this new approach we use the fact that a unital $C^{*}$-algebra $ \mathcal{A} $ is isometrically isomorphic to the algebra of all $ \mathcal{A} -$ linear operators on $ \mathcal{A} $ when $ \mathcal{A} $ is considered as a Hilbert module over itself. This enables us to apply some known results from operator theory on Hilbert  $C^{*}$-modules, such as the result concerning the complementability of the kernel and the image of a closed range $C^{*}$-operator (for more details, see \cite[Theorem 2.3.3]{MT}) and in that way we bypass several technical lemmas from the paper by Ke\v{c}ki\'{c} and Lazovi\'{c}  \cite{KL} which require long proofs.
 
 \text{ }
 
 The aim of this paper is to obtain an axiomatic semi-Fredholm and semi-Weyl theory in  unital $C^{*}$-algebras based on the approach introduced in \cite{IS10}. While \cite{IS10} deals with axiomatic Fredholm theory based on the above mentioned Hilbert module approach, in this paper we establish semi-Fredholm and semi-Weyl theory in  unital $C^{*}$-algebras following the same approach. Our motivation was to provide new and shorter proofs of the results given in \cite{BJMA}, and it is therefore the main topic and the purpose of this  paper.            

\section{Preliminaries}
Throughout this paper $ \mathcal{A} $ always stands for a unital $ C^{ *} $ -algebra and $B(\mathcal{A} ) $ denotes the set of all $\mathcal{A} $ - linear bounded operators on $ \mathcal{A} $ when $ \mathcal{A} $ is considered as a right Hilbert module over itself. Since $ \mathcal{A}$ is self-dual Hilbert module over itself, by \cite[Proposition 2.5.2]{MT} all operators that belong to $B(\mathcal{A} ) $ are adjointable. Hence, by \cite[Corollary 2.5.3]{MT} the set $B(\mathcal{A} ) $ is a unital $ C^{ *} $ -algebra.\\
Let $V$ be the map from $ \mathcal{A} $ into $B(\mathcal{A} ) $ given by $ V(a) = L_{a} $ for all $ a \in \mathcal{A} $ where $L_{a} $ is the corresponding left multiplier by $a.$ Then  $V$ is an isometric *-homomorphism, and, since $ \mathcal{A} $ is unital, it follows that $V$ is in fact an isomorphism. Thus, $B(\mathcal{A} ) $ can be identified with $ \mathcal{A} $ by considering the left multipliers. \\

We recall now the following definition. 
\begin{definition} \label{d 09}
	\cite[Definition 1.1]{KL} Let $\mathcal{A} $ be an unital $C^{*}$-algebra, and $\mathcal{F} \subseteq \mathcal{A} $ be a subalgebra which satisfies the following conditions:\\
	(i) $\mathcal{F} $ is a selfadjoint ideal in $\mathcal{A} ,$ i.e. for all $a \in \mathcal{A}, b \in \mathcal{F}  $ there holds $ab,ba \in \mathcal{F} ,$ and $a \in \mathcal{F} $ implies $a^{*} \in \mathcal{F} ;$\\
	(ii) There is an approximate unit $p_{\alpha} \in \mathcal{F}$ consisting of projections;\\
	(iii) If $p,q \in \mathcal{F} $ are projections, then there exists $v \in \mathcal{A} ,$ such that $vv^{*}=q $ and $v^{*}v \perp p, $ i.e. $v^{*}v + p $ is a projection as well.\\
	We shall call the elements of such an ideal  \textit{finite type elements}. Henceforward we shall denote this ideal by $\mathcal{F} .$
\end{definition}

Let $V$ be the isometric *-isomorphism given above. If $\mathcal{F} $  is an ideal of finite type elements in $ \mathcal{A} ,$ then it is not hard to see that $ V(\mathcal{F} ) $ is an ideal of finite type elements in $B(\mathcal{A} ),$ so we may identify $\mathcal{F} $ with $ V(\mathcal{F} ) .$\\
We let $ \text{ Proj}(\mathcal{A}) $ denote the set of all orthogonal projections in $ \mathcal{A} ,$ and, similarly, $ \text{ Proj}(\mathcal{F}) $ denotes the set of all orthogonal projections in $ \mathcal{F} .$ Now we recall the notion of Murray-von Neumann equivalence between orthogonal projections in $C^{*}-$ algebras.
\begin{definition} \label{d 10} \cite[Definition 1.2]{KL}
	Let $\mathcal{A} $ be a unital $C^{*}-$ algebra, and let $\mathcal{F} \subseteq \mathcal{A}$ be an ideal of finite type elements. In the set $ \text{ Proj}(\mathcal{A}) $ we define the equivalence relation:  
	$$p \sim q \Leftrightarrow \exists v \in \mathcal{A} \text{ } vv^{*}=p,\text{ } v^{*}v=p, $$
	i.e. Murray - von Neumann equivalence. The set $ S(\mathcal{F})=\text{Proj}(\mathcal{F})\text{ }/\sim $ is a commutative semigroup with respect to addition, and the set, $K(\mathcal{F})=G(S(\mathcal{F})),$ where $G$ denotes the Grothendic functor, is a commutative group.
\end{definition}
\begin{definition}  \label{r10d 1.1} \cite[Definition 6]{BJMA}
	Let $p, q$ be orthogonal projections in $\mathcal{A}.$ We will denote $p \preceq q $ if there exists an orthogonal projection $p^{\prime} $ in $\mathcal{A} $ such that $p^{\prime} \leq q $ and $ p \sim p^{\prime} .$ 
\end{definition}
The following concept will be of crucial importance in this paper.
\begin{definition} \label{d 11} \cite[Definition 2.1]{KL}
	Let $a \in \mathcal{A} $ and $p,q $ be projections in $\mathcal{A} .$  We say that $a$ is invertible up to pair $(p, q)$ if there exists some $b \in \mathcal{A} $  such that 
	$$(1-q)a(1-p)b=1-q, \text{  } b(1-q)a(1-p)=1-p. $$
	We refer to such $b$ as almost inverse of $a,$ or $(p,q)$-inverse of $a.$
\end{definition}
We recall also the following useful technical lemma. 
\begin{lemma} \label{l 02} \cite[Lemma 2]{BJMA}
	Let $a \in \mathcal{A} $ and $p,q,p^{\prime},q^{\prime} $ be projections in $\mathcal{A} .$ Suppose that $p,q,p^{\prime} \in \mathcal{F} .$ If $a$ is invertible up to pair $(p,q) $ and also invertible up to pair $(p^{\prime},q^{\prime}) ,$ then $q^{\prime} \in \mathcal{F} .$ 
	Similarly, if instead of $p,q,p^{\prime} $ we have that $p,q,q^{\prime} \in \mathcal{F} ,$ then we must have that $p^{\prime} \in \mathcal{F}$ as well.
\end{lemma}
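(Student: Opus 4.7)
Work modulo the ideal $\mathcal{F}$, writing $x \equiv y$ when $x - y \in \mathcal{F}$; this relation is preserved by addition and by left- or right-multiplication, since $\mathcal{F}$ is a two-sided ideal. Expanding the first-pair identity $(1-q)a(1-p)b = 1-q$, the left-hand side equals $ab$ plus terms that all contain $p$ or $q$ as a factor, while the right-hand side is $1-q$. Since $p, q \in \mathcal{F}$, this yields $ab \equiv 1 \pmod{\mathcal{F}}$; the analogous expansion of $b(1-q)a(1-p) = 1-p$ gives $ba \equiv 1 \pmod{\mathcal{F}}$. Applying the same expansion to $b'(1-q')a(1-p') = 1-p'$ and using $p' \in \mathcal{F}$ produces
\[
b'(1-q')a \equiv 1 \pmod{\mathcal{F}}.
\]

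Next, right-multiplying the congruence $b'(1-q')a \equiv 1$ by $b$ and using $ab \equiv 1$ gives $b'(1-q') \equiv b$, i.e.\ $b' \equiv b + b'q' \pmod{\mathcal{F}}$. Right-multiplying this by $q'$ and using $(q')^2 = q'$ cancels the two $b'q'$ terms and leaves $bq' \equiv 0$, so $bq' \in \mathcal{F}$. Left-multiplying by $a$ and using $ab \equiv 1$ finally yields
\[
q' \equiv a(bq') \in \mathcal{F},
\]
which is the desired conclusion.

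For the second statement, I would take adjoints of the defining identities: they show that $a^*$ is invertible up to the pair $(q,p)$ with almost inverse $b^*$, and up to $(q',p')$ with almost inverse $b'^*$. The hypothesis $p, q, q' \in \mathcal{F}$ is precisely the hypothesis of the first part applied to $a^*$ with these two pairs (noting that $\mathcal{F}$ is self-adjoint), so the first part yields $p' \in \mathcal{F}$. The main subtlety in the whole argument is spotting the ``right-multiply by $q'$'' manoeuvre: once the congruence $b' \equiv b + b'q'$ is established, exploiting $(q')^2 = q'$ to collapse the two $b'q'$ terms is the key move that isolates $bq' \in \mathcal{F}$ and lets the mod-$\mathcal{F}$ invertibility $ab \equiv 1$ close the argument.
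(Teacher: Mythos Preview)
Your argument is correct. The key manipulation---passing to the quotient $\mathcal{A}/\mathcal{F}$, extracting $b'(1-q')a \equiv 1$, then right-multiplying by $b$, by $q'$, and finally left-multiplying by $a$---is clean and watertight; the idempotence trick $(q')^2=q'$ is exactly what collapses the unwanted $b'q'$ term. The reduction of the second statement to the first via adjoints is also valid, since $\mathcal{F}$ is self-adjoint.

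Your route, however, is quite different from the paper's. The paper does not argue algebraically in $\mathcal{A}/\mathcal{F}$ at all; instead it passes through the Hilbert-module picture (Lemma~\ref{R12 L01}) and then invokes \cite[Lemma~2.7]{IS10} to translate back. Concretely, the paper takes the two $\mathcal{M}\mathcal{K}\Phi_{+}$-decompositions corresponding to the pairs $(p,q)$ and $(p',q')$, uses the approximate unit $\{P_\alpha\}$ to refine both so that they share the common ``large'' summand $\mathrm{Im}(I-P_\alpha)$, and then compares the complementary pieces: one obtains $F(\mathcal{R})\,\tilde\oplus\,N_2 \cong F(\mathcal{R}')\,\tilde\oplus\,N_2'$ with $P_{F(\mathcal{R})},P_{F(\mathcal{R}')}\in\mathcal{F}$, and concludes via the ideal property. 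Your proof is shorter, entirely elementary, and avoids the approximate unit and the module machinery altogether; it would work verbatim in any ring with a two-sided ideal and an involution. The paper's proof, by contrast, is tailored to illustrate the Hilbert-module approach that is the paper's whole point: it shows that the geometric decomposition technique from \cite{IS10} reproduces the algebraic result of \cite{BJMA}, which is the stated goal.
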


Finally, we recall the definition of semi-Fredholm elements in a unital $C^{*}-$ algebra.
\begin{definition} \label{d 12} \cite[Definition 2.2]{KL} \cite[Definition 5]{BJMA}
	Let $\mathcal{F}$ be the ideal of finite type elements in $ \mathcal{A} $ and $a \in \mathcal{A} .$ We say that $a$ is an upper semi-Fredholm element with respect to the ideal $\mathcal{F}$ if $a$ is invertible up to pair of projections $(p, q)$ where $p \in \mathcal{F} .$ Similarly, we say that $a$ is a lower semi-Fredholm element with respect to the ideal $\mathcal{F}$, however in this case we assume that $q \in \mathcal{F} $ (and not $p$).\\
	Finally, we say that $a \in \mathcal{A} $ is of Fredholm type (or abstract Fredholm element) with respect to the ideal $\mathcal{F}$ if there are projections $p,q \in \mathcal{F} $ such that $a$ is invertible up to $(p,q).$ The index of the element $a$ (or abstract index) is the element of the group $K(\mathcal{F}) $ defined by
	$$\text{ind}(a)=([p],[q]) \in K(\mathcal{F}), $$
	or less formally
	$$\text{ind}(a)=[p]-[q]. $$
\end{definition}
Below are some characterizations of semi-Fredholm type elements.
\begin{lemma} \label{l 06} \cite[Lemma 9]{BJMA}
	Let $a \in \mathcal{A} .$ Then the following holds.\\
	1) If $a$ is an upper semi-Fredholm element and  $p, q$ are projections in $\mathcal{A} $ such that $a$ is invertible up to $(p, q)$ and $(1-q) a (1-p)=a,$ then $a$ is an upper semi-Fredholm element if and only if $p \in \mathcal{F} .$\\
	2) If $a$ is a lower semi-Fredholm element and $p, q$ are projections in $\mathcal{A} $ such that $a$ is invertible up to $(p, q)$ and $(1-q) a (1-p)=a,$ then $a$ is a lower semi-Fredholm element if and only if $q\in \mathcal{F} .$
\end{lemma}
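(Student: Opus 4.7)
The forward direction of part 1) is immediate from Definition~\ref{d 12}: if $p\in\mathcal{F}$, then the triple $(p,q,b)$ already witnesses $a$ as upper semi-Fredholm.  The substance is in the converse, which I plan to handle on the Hilbert $C^{*}$-module side via the identification $\mathcal{A}\cong B(\mathcal{A})$ from the preliminaries, viewing $a$ as the adjointable operator $L_{a}$ of left multiplication by $a$.  Multiplying the normalization $(1-q)a(1-p)=a$ on the left by $q$ and on the right by $p$ gives $qa=0=ap$, and combined with $ab=1-q$ and $ba=1-p$ this yields $\ker L_{a}=p\mathcal{A}$.

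Assuming that $a$ is upper semi-Fredholm, I would pick a witness $(p_{0},q_{0},b_{0})$ with $p_{0}\in\mathcal{F}$.  The key computation is to apply $b_{0}(1-q_{0})a(1-p_{0})=1-p_{0}$ to an arbitrary $x\in\ker L_{a}$, using $ax=0$, obtaining $(1-p_{0})x=-b_{0}(1-q_{0})ap_{0}x$ and hence $x=\bigl(1-b_{0}(1-q_{0})a\bigr)p_{0}x$.  Consequently the map $\phi:p\mathcal{A}\to p_{0}\mathcal{A}$, $\phi(x)=p_{0}x$, is bounded below; it is adjointable as the restriction of $L_{p_{0}}$ to complemented submodules, with $\phi^{*}(y)=pp_{0}y$.

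Then $\phi^{*}\phi$ is left multiplication by $pp_{0}p$ on $p\mathcal{A}$, so the bounded-below condition becomes $pp_{0}p\ge\varepsilon p$ in the corner $C^{*}$-algebra $p\mathcal{A}p$; hence $pp_{0}p$ is invertible there.  Setting $h=(pp_{0}p)^{-1/2}\in p\mathcal{A}p$ and $u:=p_{0}ph\in\mathcal{A}$, I would verify $u^{*}u=p$ and that $p':=uu^{*}$ is a projection with $p'\le p_{0}$; this realises the Murray--von Neumann equivalence $p\sim p'$, so $p\preceq p_{0}$ in the sense of Definition~\ref{r10d 1.1}.  Since $p_{0}\in\mathcal{F}$ and $\mathcal{F}$ is a selfadjoint ideal, $u=p_{0}\cdot ph\in\mathcal{F}$, and therefore $p=u^{*}u\in\mathcal{F}$, proving the converse.

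Part 2) I would obtain by passing to adjoints: $(1-q)a(1-p)=a$ is equivalent to $(1-p)a^{*}(1-q)=a^{*}$, and $a$ is invertible up to $(p,q)$ with almost inverse $b$ iff $a^{*}$ is invertible up to $(q,p)$ with almost inverse $b^{*}$, so $a$ is lower semi-Fredholm iff $a^{*}$ is upper semi-Fredholm; part 1) applied to $a^{*}$ then yields $q\in\mathcal{F}$.  The main obstacle is the production of the Murray--von Neumann equivalence $p\sim p'\le p_{0}$ out of the module injection $\phi$: this is exactly the step where the Hilbert-module complementability theorem \cite[Theorem 2.3.3]{MT} and the polar decomposition of $p_{0}p$ enter, and where the approach of \cite{IS10} replaces the longer projection manipulations of \cite{KL, BJMA}.
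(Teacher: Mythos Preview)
Your argument is correct. The identity $p=Tp_{0}p$ with $T=1-b_{0}(1-q_{0})a$ indeed yields $p=(Tp_{0}p)^{*}(Tp_{0}p)\le\|T\|^{2}\,pp_{0}p$, so $pp_{0}p$ is invertible in $p\mathcal{A}p$; from there the construction of $u=p_{0}ph$ and the verification $u^{*}u=p$, $uu^{*}\le p_{0}$ go through, and $p=u^{*}u\in\mathcal{F}$ follows from the ideal property. The adjoint reduction for part 2) is fine.

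However, your route is genuinely different from the paper's. The paper does not work with an arbitrary witness $(p_{0},q_{0},b_{0})$ and does not build a partial isometry by hand. Instead it first proves Lemma~\ref{related}, which identifies the normalized pair $(p,q)$ with $(P_{\ker L_{a}},P_{(Im L_{a})^{\perp}})$, and then proves the module-side statement Lemma~\ref{closed}: for a closed-range $F$ one takes any $\mathcal{M}\mathcal{K}\Phi_{+}$-decomposition $\mathcal{A}=M_{1}\tilde{\oplus}N_{1}$ (available via \cite[Lemma~2.7]{IS10}), observes that $\ker F\subseteq N_{1}$ because $F_{\mid M_{1}}$ is an isomorphism, and concludes $P_{\ker F}\le P_{N_{1}}\in\mathcal{F}$ by the ordering of orthogonal projections. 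So the paper replaces your explicit polar-type construction by the single inclusion $\ker F\subseteq N_{1}$, at the cost of invoking the equivalence $\mathcal{K}\Phi_{+}(\mathcal{A})=\mathcal{M}\mathcal{K}\Phi_{+}(\mathcal{A})$. Your approach is more self-contained and purely $C^{*}$-algebraic, and in spirit closer to the original argument in \cite{BJMA} than to the Hilbert-module reformulation that this paper is advertising. One small remark: your final paragraph slightly overstates the role of \cite[Theorem~2.3.3]{MT}; the step producing $u$ uses only functional calculus in the corner $p\mathcal{A}p$ and needs no complementability statement.
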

\begin{lemma} \label{l 07} \cite[Lemma 10]{BJMA}
	Let $a \in \mathcal{A} .$ Then $a$ is an upper semi-Fredholm element if and only if $a$ is left invertible up to some projection $p \in \mathcal{F} .$ Similarly, $a$ is a lower semi-Fredholm element if and only if $a$ is right invertible up to some projection $q \in \mathcal{F} .$ 
\end{lemma}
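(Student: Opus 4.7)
My plan is to prove each direction of the equivalence separately, relying on the identification $\mathcal{A}\cong B(\mathcal{A})$ and on \cite[Theorem 2.3.3]{MT}, which guarantees that a closed-range adjointable operator on a Hilbert $C^{*}$-module has both kernel and image orthogonally complemented. I restrict attention to the upper semi-Fredholm assertion; the statement for lower semi-Fredholm elements follows by applying the upper case to $a^{*}$ and using the equivalence that $a$ is lower semi-Fredholm if and only if $a^{*}$ is upper semi-Fredholm, which is immediate from Definition~\ref{d 12}.

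For the easier direction, suppose $ca=1-p$ with $p\in\mathcal{F}$ a projection. Multiplying on the left and right by $1-p$ yields $(1-p)c\cdot a(1-p)=1-p$, so $a(1-p)$, regarded as an adjointable operator on $\mathcal{A}$, has a bounded left inverse and therefore closed range. By \cite[Theorem 2.3.3]{MT} this range is orthogonally complemented; letting $1-q$ denote the corresponding projection, the restriction $a(1-p)\colon(1-p)\mathcal{A}\to(1-q)\mathcal{A}$ is an adjointable bijection. Extending its inverse by zero on $q\mathcal{A}$ produces $b\in\mathcal{A}$ satisfying $(1-q)a(1-p)b=1-q$ and $b(1-q)a(1-p)=1-p$, so $a$ is upper semi-Fredholm by Definition~\ref{d 12}.

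For the converse, suppose $a$ is upper semi-Fredholm with $(p,q)$-inverse $b$ and $p\in\mathcal{F}$. I set $c=(1-p)b(1-q)$, so that $pc=0$ and $ca(1-p)=1-p$, hence $ca=(1-p)+cap$. Using $pc=0$ and $p^{2}=p$, a short computation shows that $ca$ is an idempotent, so $e':=1-ca=p-cap$ is an idempotent as well. Moreover $e'\in\mathcal{F}$, since $p\in\mathcal{F}$ and $\mathcal{F}$ is a two-sided ideal. The key step is to pass from $e'$ to an orthogonal projection with the same range: the left multiplication $L_{e'}$ belongs to $B(\mathcal{A})$ and, being idempotent, has closed range $e'\mathcal{A}$; by \cite[Theorem 2.3.3]{MT} this range is orthogonally complemented, so there is an orthogonal projection $p'\in B(\mathcal{A})\cong\mathcal{A}$ with $p'\mathcal{A}=e'\mathcal{A}$. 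Because $p'\in p'\mathcal{A}=e'\mathcal{A}$, one has $p'=e'x$ for some $x\in\mathcal{A}$, which forces $p'\in\mathcal{F}$ by the ideal property, and the identity $p'\mathcal{A}=e'\mathcal{A}$ also yields $p'e'=e'$, i.e., $(1-p')e'=0$. Therefore $(1-p')ca=(1-p')(1-e')=1-p'$, so $c'':=(1-p')c$ is a left inverse of $a$ up to the projection $p'\in\mathcal{F}$.

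The main technical point is the forward direction, specifically the passage from the idempotent $e'\in\mathcal{F}$ to an orthogonal projection in $\mathcal{F}$ with the same range. This hinges on the self-duality of $\mathcal{A}$ as a Hilbert module over itself \cite[Proposition 2.5.2]{MT}, which via \cite[Theorem 2.3.3]{MT} ensures that every adjointable idempotent in $B(\mathcal{A})$ has orthogonally complemented range---precisely the Hilbert-module step that the author advertises as shortening the purely algebraic arguments of \cite{BJMA}.
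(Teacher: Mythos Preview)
Your proof is correct, but it follows a route different from the one the paper takes. In the paper, Lemma~\ref{l 07} is obtained as the conjunction of Lemma~\ref{boundedbelow} and Lemma~\ref{related}: first one shows that $F\in\mathcal{M}\mathcal{K}\Phi_{+}(\mathcal{A})$ is equivalent to the existence of a complementable submodule $M$ with $P_{M}\in\mathcal{F}$ and $F_{\mid M^{\perp}}$ bounded below (using the block-matrix manipulations from \cite{IS1} and the diagonalisation trick of \cite[Lemma 2.7.10]{MT}), and then one checks separately that ``$F_{\mid N^{\perp}}$ bounded below'' is the same as ``$F$ left invertible up to $P_{N}$''. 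Your argument bypasses the $\mathcal{M}\mathcal{K}\Phi_{+}$-decomposition entirely: from a $(p,q)$-inverse you build the idempotent $e'=1-ca=p-cap\in\mathcal{F}$ directly, and then invoke \cite[Theorem 2.3.3]{MT} once to replace $e'$ by the orthogonal projection onto its range. This is shorter and more self-contained, and it makes the single Hilbert-module ingredient (orthogonal complementability of closed ranges) very transparent; the paper's route, on the other hand, integrates the statement into the $\mathcal{M}\mathcal{K}\Phi$-framework that drives the rest of the article.

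One minor remark: in the paper (see the proof of Lemma~\ref{related}) ``left invertible up to $p$'' is taken to mean the existence of $c$ with $ca(1-p)=1-p$, not $ca=1-p$. Your easy direction still goes through verbatim under that weaker hypothesis, since $(1-p)c\cdot a(1-p)=(1-p)\,ca(1-p)=(1-p)$ already gives a left inverse for $a(1-p)$ on $(1-p)\mathcal{A}$; and your converse actually produces the stronger conclusion $c''a=1-p'$, so both formulations are covered.
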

Next we recall the notion of a semi-Weyl type element in $\mathcal{A}$ as a generalization of a semi-Weyl operator on a Hilbert space. 

\begin{definition} \label{r10d 1.3} \cite[Definition 7]{BJMA}
	Let $a \in \mathcal{A} .$ We say that $a$ is an upper semi-Weyl type element with respect to the ideal $\mathcal{F}$ if there exist projections $p, q$ in $\mathcal{A} $ such that $p \in \mathcal{F} ,$ $p \preceq q $ and $a$ is invertible up to pair $(p, q).$ Similarly, we say that $a$ is a lower semi-Weyl type element with respect to the ideal $\mathcal{F}$, only in this case we assume that $q \in \mathcal{F} $ and $q \preceq p .$ Finally, we say that $a$ is a Weyl type element with respect to the ideal $\mathcal{F}$ if $a$ is invertible up to pair $(p, q)$ where $p, q $ are projections in $ \mathcal{F} $ and $p \sim q .$
\end{definition}

\begin{remark} \cite[Remark 2]{BJMA}
	We notice that every Weyl type element has index zero. The converse is true if $K(\mathcal{F}) $ satisfies the cancellation property. 
\end{remark}

Set
\begin{flushleft}
	$$\mathcal{K}\Phi_{+} (\mathcal{A})= \lbrace a \in \mathcal{A} \mid  a \text{ is an upper semi-Fredholm type element }  \rbrace ,$$
	$$\mathcal{K}\Phi_{-} (\mathcal{A})= \lbrace a \in \mathcal{A} \mid  a \text{  is a lower semi-Fredholm type element }  \rbrace ,$$
	$$\mathcal{K}\Phi (\mathcal{A})= \lbrace a \in \mathcal{A} \mid  a \text{ is a Fredholm type element }  \rbrace ,$$
	$$\mathcal{K}\Phi_{+}^{-} (\mathcal{A})= \lbrace a \in \mathcal{A} \mid  a \text{  is an upper semi-Weyl type element }  \rbrace ,$$
	$$\mathcal{K}\Phi_{-}^{+} (\mathcal{A})= \lbrace a \in \mathcal{A} \mid a \text{  is a lower semi-Weyl type element }  \rbrace ,$$
	$$\mathcal{K}\Phi_{0} (\mathcal{A})= \lbrace a \in \mathcal{A} \mid  a \text{  is a Weyl type element }  \rbrace .$$
\end{flushleft}
It is understood that we consider a fixed ideal $\mathcal{F}$ of finite type elements. \\
Notice that by definition we have $\mathcal{K}\Phi_{+}^{-} (\mathcal{A}) \subseteq \mathcal{K}\Phi_{+} (\mathcal{A}), \text{ } \mathcal{K}\Phi_{-}^{+} (\mathcal{A}) \subseteq \mathcal{K}\Phi_{-} (\mathcal{A}) $ and $\mathcal{K}\Phi_{0} (\mathcal{A}) \subseteq \mathcal{K}\Phi (\mathcal{A}) .$ \\ 
We recall also some properties of semi-Weyl type elements. 
\begin{proposition} \label{r10p 1.7} \cite[Proposition 17]{BJMA}
	Let $a \in \mathcal{A} .$ Then the following statements hold.\\
	1) $a \in \mathcal{K}\Phi_{+}^{-}(\mathcal{A})  $ if and only if there exist a left invertible element $b \in \mathcal{A} $ and some $ f \in \mathcal{F} $ such that $ a=b+f ,$\\
	2) $a \in \mathcal{K}\Phi_{-}^{+}(\mathcal{A}) $ if and only if there exist a right invertible element $ b \in \mathcal{A} $ and some $ f \in \mathcal{F} $ such that $a=b+f ,$\\
	3) $a \in \mathcal{K}\Phi_{0} (\mathcal{A}) $ if and only if there exist an invertible element $b \in \mathcal{A}  $ and some $ f \in \mathcal{F} $ such that $ a=b+f .$ 
\end{proposition}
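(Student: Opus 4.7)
The three parts share a parallel structure and I will handle them together using the Hilbert-module viewpoint from \cite{IS10}: every element of $\mathcal{A}$ acts as an adjointable operator on the Hilbert module $\mathcal{A}$, and such an operator with closed range has complementable kernel and image by \cite[Theorem 2.3.3]{MT}. For a semi-Fredholm type element $a$ we thus obtain kernel and cokernel projections $p_{0}, q_{0} \in \text{Proj}(\mathcal{A})$, with $a$ restricting to an adjointable isomorphism $(1-p_{0})\mathcal{A} \to (1-q_{0})\mathcal{A}$. The semi-Weyl hypotheses translate as follows: in (1), $p_{0} \in \mathcal{F}$ and $p_{0} \preceq q_{0}$; in (2), $q_{0} \in \mathcal{F}$ and $q_{0} \preceq p_{0}$; in (3), $p_{0}, q_{0} \in \mathcal{F}$ and $p_{0} \sim q_{0}$.

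For the forward direction of (1), choose a partial isometry $v \in \mathcal{A}$ with $v^{*}v = p_{0}$ and $vv^{*} = p_{0}' \leq q_{0}$, afforded by $p_{0} \preceq q_{0}$. Set $b := a + v$. Because $ap_{0} = 0$ and $v = vp_{0}$, the element $b$ acts as $a$ on $(1-p_{0})\mathcal{A}$ (an isomorphism onto $(1-q_{0})\mathcal{A}$) and as $v$ on $p_{0}\mathcal{A}$ (an isomorphism onto $p_{0}'\mathcal{A}$); these two target submodules are orthogonal because $p_{0}' \leq q_{0}$, so $b$ is injective with closed, complementable range. One then checks that $c := \hat{a}^{-1} + v^{*}$, where $\hat{a}^{-1}$ is the pseudo-inverse of $a$ extended by $0$ on $q_{0}\mathcal{A}$, satisfies $cb = 1$, so $b$ is left invertible. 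Finally $a - b = -v \in \mathcal{F}$ since $v = vp_{0}$ with $p_{0} \in \mathcal{F}$. The forward direction of (2) is analogous using $v$ with $vv^{*} = q_{0}$ and $v^{*}v \leq p_{0}$: the same construction $b = a + v$ is surjective (hence right invertible), and $v \in \mathcal{F}$ because $v = q_{0}v$ with $q_{0} \in \mathcal{F}$. For (3), choosing $v$ with $v^{*}v = p_{0}$ and $vv^{*} = q_{0}$ makes $b = a + v$ a bijection of $\mathcal{A}$ onto itself, hence an invertible element.

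For the reverse direction of (1), suppose $a = b + f$ with $b$ left invertible and $f \in \mathcal{F}$. Set $c := (b^{*}b)^{-1}b^{*}$, so that $cb = 1$ and $ca = 1 + cf$ with $cf \in \mathcal{F}$. Using the approximate unit of projections in $\mathcal{F}$, pick $p \in \text{Proj}(\mathcal{F})$ large enough that $\|(1-p)cf(1-p)\| < 1$; then $(1-p) + (1-p)cf(1-p)$ is invertible in the corner $(1-p)\mathcal{A}(1-p)$, producing an element $d$ with $da(1-p) = 1-p$. By Lemma~\ref{l 07}, this already places $a$ in $\mathcal{K}\Phi_{+}(\mathcal{A})$. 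The main obstacle is upgrading to $\mathcal{K}\Phi_{+}^{-}(\mathcal{A})$: using the Hilbert-module characterization again, $a$ has kernel projection $p_{0}' \in \mathcal{F}$ and cokernel projection $q_{0}'$, and one must verify $p_{0}' \preceq q_{0}'$. I would handle this by cases: if $q_{0}' \in \mathcal{F}$, then $a$ is Fredholm type and the index $[p_{0}'] - [q_{0}'] \in K(\mathcal{F})$ coincides with that of $b$ by invariance of the index under finite-type perturbation (cf.\ \cite{KL, IS10}), and $b$ has trivial kernel so the inequality is immediate; if $q_{0}' \notin \mathcal{F}$, then $p_{0}' \preceq q_{0}'$ follows from axiom (iii) of Definition~\ref{d 09} applied inside the corner $q_{0}'\mathcal{A}q_{0}'$, since a copy of $p_{0}'$ can be found under any projection with infinite-type complement. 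Cases (2) and (3) then follow by adjoining (replacing $a$ by $a^{*}$, noting $b^{*}$ is left invertible iff $b$ is right invertible) and by combining (1) and (2), respectively.
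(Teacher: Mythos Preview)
Your argument rests on an unjustified premise: you assume that a semi-Weyl element $a$ has closed range, so that kernel and cokernel projections $p_0, q_0$ exist with $a$ restricting to an isomorphism $(1-p_0)\mathcal{A} \to (1-q_0)\mathcal{A}$, and moreover that the defining relation $p \preceq q$ transfers to this particular pair. Neither is given by Definition~\ref{r10d 1.3}. The definition only supplies \emph{some} pair $(p,q)$ with $a$ invertible up to $(p,q)$, and in the associated $\mathcal{M}\mathcal{K}\Phi$-decomposition the block $F_4\colon N_1 \to N_2$ need neither vanish nor have closed image, so $a$ itself need not have closed range. Your forward construction $b = a + v$ relies on $ap_0 = 0$ and therefore breaks. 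The repair is what the paper does (via \cite[Theorem 5.10]{IS1} and Lemma~\ref{R12 L02}): work with an $\mathcal{M}\mathcal{K}\Phi_+^{-\prime}$-decomposition directly, choose an embedding $\iota\colon N_1 \hookrightarrow N_2$, and let $b$ have matrix $\bigl[\begin{smallmatrix} F_1 & 0 \\ 0 & \iota \end{smallmatrix}\bigr]$; then $b$ is bounded below and $a - b$ factors through the skew projection onto $N_1$, hence lies in $\mathcal{F}$.

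The reverse direction has more serious gaps. Even granting closed range, your case split fails. In the Fredholm case, equality of indices in $K(\mathcal{F})$ gives only $[p_0'] + [q_b] = [q_0']$ up to stabilization in the Grothendieck group, which does not yield $p_0' \preceq q_0'$ absent a cancellation property that is nowhere assumed. In the non-Fredholm case, axiom~(iii) of Definition~\ref{d 09} lets you place a copy of an $\mathcal{F}$-projection \emph{orthogonal to} another $\mathcal{F}$-projection; it says nothing about embedding $p_0' \in \mathcal{F}$ \emph{under} an arbitrary $q_0' \notin \mathcal{F}$. The paper bypasses both issues: Proposition~\ref{R12 P01} proves directly that $\mathcal{M}\mathcal{K}\Phi_+^{-\prime}(\mathcal{A})$ is stable under perturbation by elements of $\mathcal{F}$, by refining an $\mathcal{M}\mathcal{K}\Phi_+^{-\prime}$-decomposition for $F$ along the approximate unit (as in \cite[Lemma 2.11]{IS10}) to obtain one that simultaneously serves $F+K$, in which the relation $N_1 \preceq N_2$ is preserved by construction. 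Since a left-invertible $b$ trivially lies in $\mathcal{M}\mathcal{K}\Phi_+^{-\prime}(\mathcal{A})$ (its kernel is zero), the implication follows at once.
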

\begin{proposition} \label{r10p 1.4} \cite[Proposition 13]{BJMA}
	The sets $\mathcal{K}\Phi_{+}(\mathcal{A}) \setminus \mathcal{K}\Phi_{+}^{-} (\mathcal{A})  ,$ $\mathcal{K}\Phi_{-}(\mathcal{A}) \setminus \mathcal{K}\Phi_{-}^{+} (\mathcal{A})  $ and $\mathcal{K}\Phi(\mathcal{A}) \setminus \mathcal{K}\Phi_{0} (\mathcal{A}) $ are open in the norm topology of $\mathcal{A} .$
\end{proposition}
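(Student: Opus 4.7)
The plan is to establish openness of $\mathcal{K}\Phi_{+}(\mathcal{A}) \setminus \mathcal{K}\Phi_{+}^{-}(\mathcal{A})$; the case of $\mathcal{K}\Phi_{-}(\mathcal{A}) \setminus \mathcal{K}\Phi_{-}^{+}(\mathcal{A})$ will then follow by passing to adjoints (using $*$-invariance of $\mathcal{F}$), and that of $\mathcal{K}\Phi(\mathcal{A}) \setminus \mathcal{K}\Phi_{0}(\mathcal{A})$ by combining the upper and lower arguments.

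First I would fix $a \in \mathcal{K}\Phi_{+}(\mathcal{A}) \setminus \mathcal{K}\Phi_{+}^{-}(\mathcal{A})$. Using the identification $\mathcal{A} \cong B(\mathcal{A})$ by left multipliers together with the complementability result \cite[Theorem 2.3.3]{MT} (as exploited in \cite{IS10}), I would select the canonical pair $(p,q)$ for $a$: $p$ is the projection onto $\ker L_{a}$ (which lies in $\mathcal{F}$ since $a \in \mathcal{K}\Phi_{+}(\mathcal{A})$), and $q$ is the projection onto $(\operatorname{Im} L_{a})^{\perp}$. By Lemma \ref{l 06} together with Definition \ref{r10d 1.3}, the hypothesis $a \notin \mathcal{K}\Phi_{+}^{-}(\mathcal{A})$ becomes $p \not\preceq q$. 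The corner $(1-q) L_{a} (1-p)\colon (1-p)\mathcal{A} \to (1-q)\mathcal{A}$ is a bijective adjointable operator, and since bijective adjointable operators form a norm-open set, there is $\varepsilon > 0$ such that $\|c - a\| < \varepsilon$ forces $(1-q) L_{c} (1-p)$ to be bijective as well; in particular, $c$ is then invertible up to the same pair $(p,q)$.

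The heart of the argument is to show that no such $c$ can additionally lie in $\mathcal{K}\Phi_{+}^{-}(\mathcal{A})$. Assuming otherwise, $c$ would admit a second witnessing pair $(p_{1},q_{1})$ with $p_{1} \in \mathcal{F}$ and $p_{1} \preceq q_{1}$, and the goal is to transfer this subordination back to $p \preceq q$, contradicting the choice of $a$. This hinges on an index-invariance step: combining the two pairs for $c$ via Lemma \ref{l 02} and the Hilbert-module descriptions of $\ker L_{c}$ and $\operatorname{Im} L_{c}$, I would establish an equivalence of the form $p + q_{1} \sim p_{1} + q$, with the implementing partial isometry constructed, where necessary, via Definition \ref{d 09}(iii). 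Monotonicity of Murray--von Neumann subordination under orthogonal addition then pulls $p_{1} \preceq q_{1}$ back to $p \preceq q$.

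The main obstacle I foresee is precisely this index-invariance step in the upper semi-Fredholm (rather than full Fredholm) setting. The Ke\v{c}ki\'{c}--Lazovi\'{c} $K(\mathcal{F})$-argument is naturally stated when all four projections lie in $\mathcal{F}$, whereas here $q$ and $q_{1}$ need not. I expect to handle this by carrying the comparison inside the corner cut out by $(1-q)(1-q_{1})$, where $L_{c}$ equals both its compressions and the comparison reduces to a statement about the "finite side" $p, p_{1} \in \mathcal{F}$, followed by an application of Lemma \ref{l 02} to certify that all auxiliary projections produced stay in $\mathcal{F}$. Once this equivalence is in place, the remainder of the proof is formal.
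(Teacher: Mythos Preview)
There is a genuine gap at the very first step. You propose to take the ``canonical pair'' with $p=P_{\ker L_a}$ and $q=P_{(\operatorname{Im} L_a)^\perp}$, invoking \cite[Theorem 2.3.3]{MT}. That theorem applies only when $\operatorname{Im} L_a$ is closed, but membership in $\mathcal{K}\Phi_+(\mathcal{A})$ does \emph{not} force closed range: in an $\mathcal{M}\mathcal{K}\Phi_+$-decomposition the corner $F_4\colon N_1\to N_2$ is completely arbitrary, so $\operatorname{Im} L_a=M_2\tilde\oplus F_4(N_1)$ need not be closed (Lemma~\ref{closed} in the paper is explicitly conditional on closed range, and Lemma~\ref{l 06} only treats pairs already satisfying $(1-q)a(1-p)=a$). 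Without this canonical pair your perturbation argument in step~4 and the reduction ``$a\notin\mathcal{K}\Phi_+^-\Longleftrightarrow p\not\preceq q$'' both lose their anchor. The second gap is the one you flag yourself: the ``index-invariance'' transfer $p+q_1\sim p_1+q$ with $q,q_1$ possibly outside $\mathcal{F}$. Your proposed fix, compressing to the corner $(1-q)(1-q_1)$, does not work as stated because that product is not a projection in general, and infima of projections need not exist in an arbitrary unital $C^*$-algebra; Lemma~\ref{l 02} alone will not manufacture the needed partial isometry.

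The paper's proof (Proposition~\ref{R12 P02}) avoids both problems by never passing through $\ker L_a$ or $\operatorname{Im} L_a$. Instead it works entirely with $\mathcal{M}\mathcal{K}\Phi_+$-decompositions and uses the approximate unit $\{P_\alpha\}$ of $\mathcal{F}$: for $D$ close to $F$ one first produces matched decompositions for $F$ and $D$ (as in \cite[Theorem 4.2]{FIL}), and then, given a hypothetical $\mathcal{M}\mathcal{K}{\Phi_+^-}'$-decomposition for $D$, refines both $D$-decompositions so that their ``large'' summands coincide with $\operatorname{Im}(I-P_\alpha)$ for a common large $\alpha$ (the technique from \cite[Theorem 2.9]{IS10}). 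The comparison of the complementary ``small'' summands --- all of whose projections lie in $\mathcal{F}$ --- then transports the relation $N_1''\preceq N_2''$ back to an $\mathcal{M}\mathcal{K}{\Phi_+^-}'$-decomposition for $F$, giving the contradiction. This common-refinement-via-$P_\alpha$ device is precisely what replaces your missing index-invariance step, and it requires no closed-range hypothesis.
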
 
\begin{lemma} \label{semi-Weyl-intersect} \cite[Lemma 19]{BJMA}
	Let $a \in \mathcal{K}\Phi_{+}^{-} (\mathcal{A}) \cap \mathcal{K}\Phi_{-}^{+} (\mathcal{A}) \cap \mathcal{K}\Phi (\mathcal{A}) .$ Then there exist projections $p, q$ in $\mathcal{F}$ such that $a$ is invertible up to $(p, q), qa (1-p)=0,$ $p \preceq q $ and $q \preceq p.$
\end{lemma}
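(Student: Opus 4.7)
The plan is to combine the canonical Hilbert-module Fredholm decomposition of $a$ with a stabilization procedure that absorbs the two Murray--von Neumann comparisons coming from the semi-Weyl hypotheses. First, since $a \in \mathcal{K}\Phi(\mathcal{A})$, the operator $L_{a} \in B(\mathcal{A})$ is Fredholm as an adjointable operator on the Hilbert module $\mathcal{A}$, so by \cite[Theorem 2.3.3]{MT} its kernel and the orthogonal complement of its image are complementable. I let $p_{0}, q_{0} \in \mathcal{A}$ denote the projections corresponding to $\ker L_{a}$ and $(\operatorname{Im} L_{a})^{\perp}$. Then $a p_{0}=0$ and $q_{0} a=0$, so in particular $q_{0} a(1-p_{0})=0$, and the restriction $L_{a}\colon (1-p_{0})\mathcal{A}\to (1-q_{0})\mathcal{A}$ is an isomorphism whose inverse produces $b\in\mathcal{A}$ realizing $a$ as invertible up to $(p_{0},q_{0})$. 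A comparison with any Fredholm pair in $\mathcal{F}^{2}$, combined with Lemma \ref{l 02}, places $p_{0}, q_{0} \in \mathcal{F}$.

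Next, I bring in the semi-Weyl hypotheses. From $a\in\mathcal{K}\Phi_{+}^{-}(\mathcal{A})$ I pick $(p_{1},q_{1})$ with $p_{1}\in\mathcal{F}$, $p_{1}\preceq q_{1}$, and $a$ invertible up to $(p_{1},q_{1})$; Lemma \ref{l 02} forces $q_{1}\in\mathcal{F}$. Analogously $a\in\mathcal{K}\Phi_{-}^{+}(\mathcal{A})$ yields $(p_{2},q_{2})\in\mathcal{F}^{2}$ with $q_{2}\preceq p_{2}$. By the index invariance established for the Hilbert-module approach in \cite{IS10},
$$[p_{0}]-[q_{0}]=[p_{1}]-[q_{1}]=[p_{2}]-[q_{2}] \quad \text{in } K(\mathcal{F}).$$
Since $p_{1}\preceq q_{1}$ there is a subprojection $s_{1}\le q_{1}$ with $s_{1}\sim p_{1}$, so $t_{1}:=q_{1}-s_{1}$ is a projection in $\mathcal{F}$ and $[q_{0}]-[p_{0}]=[t_{1}]$. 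The Grothendieck construction then furnishes $r\in\operatorname{Proj}(\mathcal{F})$ with $q_{0}+r\sim p_{0}+t_{1}+r$ after orthogonalizing the summands via condition (iii) of Definition \ref{d 09}. Setting $p:=p_{0}+r$ and $q:=q_{0}+r$, with each stabilizer chosen orthogonal to the pre-existing projections and to the supports of $a$, I obtain $p\preceq q$. The symmetric argument starting from $(p_{2},q_{2})$, together with a further stabilizer $r'$, absorbs the lower semi-Weyl inequality to simultaneously achieve $q\preceq p$.

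The main obstacle is ensuring that the stabilization just described preserves both invertibility of $a$ up to $(p,q)$ and the sharp identity $q a (1-p)=0$, rather than merely a version of it up to a stabilizer. The stabilizing projections $r, r'$ must be chosen---via iterated application of axiom (iii) in Definition \ref{d 09}---to be Murray--von Neumann equivalent to the required projections yet orthogonal to the supports of $a$ on both the domain and range sides, so that $a$ acts trivially through them. Orchestrating this for a single pair $(p,q)$ that simultaneously realizes both Murray--von Neumann comparisons is the central technical point of the proof.
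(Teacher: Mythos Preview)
Your proposal has a genuine gap at the stabilization step, and the gap is structural rather than technical.

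You want to enlarge the canonical pair $(p_{0},q_{0})$ by adding a stabilizer $r\in\operatorname{Proj}(\mathcal{F})$ that is ``orthogonal to the pre-existing projections and to the supports of $a$''. But in a \emph{unital} algebra this is impossible for $r\neq 0$. The right support of $L_{a}$ is exactly $1-p_{0}$ and the left support is $1-q_{0}$; a projection orthogonal to both $p_{0}$ and $1-p_{0}$ is orthogonal to $1$, hence zero. Thus there is no room to attach an external stabilizer while keeping $r a=0$ and $a r=0$, which is what you need for $qa(1-p)=0$ and for invertibility up to $(p,q)$ to survive. Axiom~(iii) of Definition~\ref{d 09} lets you move a given finite projection to one orthogonal to a fixed finite projection, but it cannot make it orthogonal to the cofinite support $1-p_{0}$. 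You correctly flag this as ``the central technical point'', but the proposal does not actually overcome it; as written, the argument forces $r=r'=0$ and you are back to $(p_{0},q_{0})$, for which neither comparison is known.

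A second, smaller issue: you invoke \cite[Theorem 2.3.3]{MT} to produce $p_{0},q_{0}$ from Fredholmness of $L_{a}$, but that theorem has \emph{closed range} as a hypothesis, not a conclusion, and an abstract Fredholm element need not have closed range. You should instead start from an arbitrary $\mathcal{M}\mathcal{K}\Phi$-decomposition and pass through \cite[Lemma 2.7]{IS10}.

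The paper's route is different in kind: rather than stabilizing from outside via $K(\mathcal{F})$, it refines the two given semi-Weyl decompositions \emph{from inside} using the approximate unit $\{P_{\alpha}\}$. One chooses $\alpha$ large enough that both $M_{1}$ and $M_{1}'$ contain (up to isomorphism) $\operatorname{Im}(I-P_{\alpha})$, obtaining two decompositions with the same ``big'' summand $\operatorname{Im}(I-P_{\alpha})$ and finite complements $\mathcal{R}\tilde\oplus N_{1}$, $\mathcal{R}'\tilde\oplus N_{1}'$. The comparison $N_{1}\preceq N_{2}$ then upgrades to a comparison of these enlarged finite complements because the extra pieces $\mathcal{R},\mathcal{R}'$ are carried isomorphically by $F$; the argument from \cite[Proposition 4.4]{FIL} then merges the two inequalities into a single decomposition. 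The point is that the ``stabilizers'' here are carved out of the isomorphism block of $F$ itself, so the diagonal form and the identity $qa(1-p)=0$ are preserved automatically---exactly the feature your external-stabilizer approach cannot reproduce.
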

In \cite{DDj2} \DJ{}or\dj{}evi\'{c} introduced the notion of a generalized Weyl operator on a Hilbert space, which is a closed range operator whose kernel is isomorphic to the orthogonal complement of its image. He proved in \cite[Theorem 1]{DDj2} that a composition of two generalized Weyl operator is also a generalized Weyl operator if this composition has closed image. The next proposition is an algebraic generalization of this result in the case of unital $C^{*}-$ algebras.
\begin{proposition} \label{R8 p2.23} \cite[Proposition 20]{BJMA}
	Let, $a,b \in \mathcal{A} $ and suppose that there exist projections $p,q,p^{\prime}, q^{\prime}, \tilde{p}, \tilde{q} $ in $ \mathcal{A}$ such that $(1-q)a(1-p)=a ,$ $(1-q^{\prime})b(1-p^{\prime})=b  ,$  $(1-\tilde{q})ba(1-\tilde{p})=ba $ and $a,b,ba $ are invertible up to $(p,q), $  $(p^{\prime},q^{\prime}) $ and $(\tilde{p},\tilde{q}),$ respectively. If $p \sim q $ and $p^{\prime} \sim q^{\prime},$ then  $\tilde{p} \sim\tilde{q} .$ 
\end{proposition}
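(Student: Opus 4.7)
The strategy is to promote the hypotheses $p\sim q$ and $p'\sim q'$ to outright invertibility of suitable perturbations $\hat a,\hat b$ of $a,b$, and then recover $\tilde p\sim\tilde q$ by a Schur-complement analysis of the invertible product $\hat b\hat a$ with respect to $\tilde p,\tilde q$. Pick partial isometries $v,v'\in\mathcal A$ with $vv^*=p$, $v^*v=q$, $v'v'^*=p'$, $v'^*v'=q'$; they automatically satisfy $v=pvq$ and $v^*=qv^*p$ (and analogously for $v'$). Set $\hat a:=a+v^*$ and $\hat b:=b+v'^*$. Using $a=(1-q)a(1-p)$ and taking the almost-inverse $b_1$ of $a$ to lie in $(1-p)\mathcal A(1-q)$, a direct computation shows that all four cross terms in $(a+v^*)(b_1+v)$ and $(b_1+v)(a+v^*)$ vanish by orthogonality, so $\hat a^{-1}=b_1+v$; analogously $\hat b^{-1}=b_2+v'$, and therefore $c:=\hat b\hat a$ is invertible in $\mathcal A$.

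\smallskip

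\textbf{Key identity.} The crux of the proof is the identity $(1-\tilde q)c(1-\tilde p)=ba$. I first extract the projection inequalities $p\le\tilde p$ and $q'\le\tilde q$ from the structural assumptions on $ba$: from $bap=0$ and $b_3(ba)=1-\tilde p$ one deduces $(1-\tilde p)p=0$, i.e.\ $p\le\tilde p$; dually, $q'b=0$ together with $(ba)b_3=1-\tilde q$ gives $q'(1-\tilde q)=0$, i.e.\ $q'\le\tilde q$. (In Hilbert-module language these are the containments $\ker a\subseteq\ker(ba)$ and $\operatorname{im}(ba)\subseteq\operatorname{im}(b)$.) Expanding $(1-\tilde q)c(1-\tilde p)$ into the four summands coming from $\hat b\hat a=ba+bv^*+v'^*a+v'^*v^*$, the hypothesis handles the $ba$-term, the inequality $p\le\tilde p$ forces $v^*(1-\tilde p)=v^*p(1-\tilde p)=0$ (killing the two summands containing $v^*$ on the right), and $q'\le\tilde q$ forces $(1-\tilde q)v'^*=(1-\tilde q)q'v'^*=0$ (killing the $v'^*a$-term on the left). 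Hence $(1-\tilde q)c(1-\tilde p)=ba$.

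\smallskip

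\textbf{Schur complement and conclusion.} Decompose $c$ with respect to $\mathcal A=(1-\tilde p)\mathcal A\oplus\tilde p\mathcal A$ (input) and $\mathcal A=(1-\tilde q)\mathcal A\oplus\tilde q\mathcal A$ (output). The $(1,1)$-block is $c_{11}=ba$, which has the two-sided inverse $d_{11}:=(1-\tilde p)b_3(1-\tilde q)$ as a map $(1-\tilde p)\mathcal A\to(1-\tilde q)\mathcal A$. The standard block factorisation $c=L\cdot\operatorname{diag}(c_{11},s)\cdot R$, in which $L,R\in\mathcal A$ are unit-triangular (hence invertible in $\mathcal A$ because their off-diagonal parts are square-zero), reduces the invertibility of $c$ to the invertibility of the Schur complement
\[ s:=\tilde qc\tilde p-\tilde qc(1-\tilde p)\,d_{11}\,(1-\tilde q)c\tilde p\in\tilde q\mathcal A\tilde p, \]
so $s$ restricts to an isomorphism $\tilde p\mathcal A\to\tilde q\mathcal A$. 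Consequently $s^*s$ is a positive invertible element of the corner $\tilde p\mathcal A\tilde p$, and the polar decomposition in that corner produces $U:=s(s^*s)^{-1/2}$ satisfying $U^*U=\tilde p$ and $UU^*=s(s^*s)^{-1}s^*=\tilde q$; setting $w:=U^*$ exhibits $\tilde p\sim\tilde q$. The main obstacle is the key identity of the previous paragraph: while the analogous identity with $p,q'$ in place of $\tilde p,\tilde q$ is immediate from $v^*(1-p)=0$ and $(1-q')v'^*=0$, transporting it up to the a priori larger $\tilde p,\tilde q$ requires the two nontrivial comparisons $p\le\tilde p$ and $q'\le\tilde q$ that encode the kernel/image containments under composition. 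Once that identity is in place, the Schur-complement and polar-decomposition arguments proceed routinely inside $\mathcal A$.
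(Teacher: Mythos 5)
Your argument is correct: with the almost inverses $b_1,b_2,b_3$ normalized to lie in the corners $(1-p)\mathcal A(1-q)$, $(1-p')\mathcal A(1-q')$, $(1-\tilde p)\mathcal A(1-\tilde q)$, the cross terms in $(a+v^*)(b_1+v)$ and $(b_1+v)(a+v^*)$ do vanish, so $\hat a$ and $\hat b$ are invertible; the comparisons $p\le\tilde p$ and $q'\le\tilde q$ follow from $ap=0$, $q'b=0$ together with $b_3(ba)=1-\tilde p$, $(ba)b_3=1-\tilde q$, and they kill exactly the right summands in $(1-\tilde q)\hat b\hat a(1-\tilde p)$; the factorization $c=LMR$ with $L=1+c_{21}d_{11}$ and $R=1+d_{11}c_{12}$ square-zero perturbations of the identity is valid, so the Schur complement $s$ has a two-sided inverse as a corner element and $U=s(s^*s)^{-1/2}$ implements $\tilde p\sim\tilde q$. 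However, this is a genuinely different route from the paper's. The paper does not compute inside $\mathcal A$ at all: via Lemma \ref{related} it identifies ``$a$ invertible up to $(p,q)$ with $(1-q)a(1-p)=a$'' with ``the left multiplication operator has closed image, $p=P_{\ker}$ and $q=P_{(Im)^{\perp}}$,'' and via \cite[Lemma 2.1]{IS10} it identifies Murray--von Neumann equivalence of these projections with isomorphism of the corresponding submodules; the proposition then reduces to the statement, recalled at the end of Section 3 from \cite{IS5} and \cite{Ths}, that a composition of two generalized Weyl operators with closed image is again a generalized Weyl operator. The paper's reduction buys brevity given the Hilbert-module machinery of \cite{IS10}; your proof buys a short, self-contained, purely $C^*$-algebraic argument that uses no module theory and would stand on its own as a direct proof of \cite[Proposition 20]{BJMA}.
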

At the end, we recall the following definition regarding Hilbert modules.
\begin{definition} \label{DP D04} 
	\cite[Definition 2.3.1]{MT} A closed submodule $\mathcal{N} $ in a Hilbert $C^{*}$-module $ \mathcal{M}$ is called (topologically) complementable if there exists a closed submodule $\mathcal{L} $ in $\mathcal{M} $ such that  $\mathcal{N}+\mathcal{L}=\mathcal{M} ,\mathcal{N} \cap \mathcal{L}=0.$
\end{definition}	
By the symbol $\tilde{ \oplus} $ \index{$\tilde{ \oplus} $} we denote the direct sum of modules as given in \cite{MT}.

Thus, if $M$ is a Hilbert $C^{*}$-module and $M_{1}, M_{2}$ are two closed submodules of $M,$ we write $M=M_{1} \tilde \oplus M_{2}$ if $M_{1} \cap M_{2}=\lbrace 0 \rbrace$ and $M_{1}+ M_{2}=M.$ If, in addition $M_{1}$  and $M_{2}$ are mutually orthogonal, then we write $M=M_{1} \oplus M_{2}.$
\begin{remark} \label{r10 r01}\cite[Remark 1.4]{IS10}
	If $\sqcap \in B(\mathcal{A})  $ is a (skew ) projection, then, since $Im \sqcap $ is closed, by \cite[Theorem 2.3.3]{MT} we get that $Im \sqcap $ is  complementable. Hence, every closed and complementable submodule $M$ of $\mathcal{A} $ is orthogonally complementable. The corresponding orthogonal projection onto $M$ will be denoted by $ P_{M } .$
\end{remark}

\section{Main results}

Throughout this section we let $ \mathcal{F} $ be an ideal of finite type elements in $ \mathcal{A} $ and $ \lbrace P_{\alpha} \rbrace $ be an approximate unit for $ \mathcal{F} $ consisting of orthogonal projections. As observed in the previous section, $ \mathcal{A} $ can be identified with $ B( \mathcal{A} ) $ by considering the left multipliers on $ \mathcal{A} .$\\
We start with the following definition, which is an extended version of \cite[Definition 2.8]{IS10}.

\begin{definition}
	Let $F \in B(\mathcal{A}) . $  We say that $ F \in \mathcal{M} \mathcal{K} \Phi_{+} (\mathcal{A} ) $ if there exists a decomposition 
	$$ \mathcal{A} = M_{1} \tilde{\oplus} N_{1}  \stackrel{F}{\longrightarrow} M_{2} \tilde{\oplus} N_{2} = \mathcal{A} $$ 
	with respect to which $F$
	has the matrix 
	$ 
	\begin{pmatrix}
		F_{1}  & 0 \\
		0 & F_{4}  
	\end{pmatrix} 
	$  
	where $F_{1} $ is an isomorphism and $ P_{N_{1} } \in \mathcal{F} .$ \\
	Similarly, we say that $ F \in \mathcal{M} \mathcal{K} \Phi_{-} (\mathcal{A} ) $ if all the above conditions hold except that in this case we assume that $ P_{N_{2} } \in \mathcal{F} .$\\
	Finally, we say that $ F \in \mathcal{M} \mathcal{K} \Phi (\mathcal{A} ) $ if the above conditions hold and both  $P_{N_{1} } P_{N_{2} } \in \mathcal{F} .$ In this case, we put then 
	$$ index F = [P_{N_{1} } ] - [ P_{N_{2} } ]$$ in $ K(\mathcal{F} ) .$
\end{definition}
By \cite[Lemma 2.7]{IS10} it follows that $ \mathcal{M} \mathcal{K} \Phi_{+} (\mathcal{A} ) = \mathcal{K} \Phi_{+} (\mathcal{A} ) ,  \mathcal{M} \mathcal{K} \Phi_{-} (\mathcal{A} ) = \mathcal{K} \Phi_{-} (\mathcal{A} ) $ and $ \mathcal{M} \mathcal{K} \Phi (\mathcal{A} ) = \mathcal{K} \Phi (\mathcal{A} ) .$ Here we again identify $ B(\mathcal{A}) $ with $ \mathcal{A} .$\\
We have the following lemma.
\begin{lemma}  \label{R12 L01}
	Let $F \in \mathcal{M} \mathcal{K} \Phi_{+} (\mathcal{A}) $ and suppose that
	
	$$\mathcal{A} = M_{1} \tilde \oplus N_{1} \stackrel{F}{\longrightarrow}  M_{2} \tilde \oplus N_{2}= \mathcal{A} ,$$
	$$\mathcal{A} = M_{1}^{\prime} \tilde \oplus N_{1}^{\prime} \stackrel{F}{\longrightarrow}  M_{2}^{\prime} \tilde \oplus N_{2}^{\prime}= \mathcal{A} ,$$
	
	are two $\mathcal{M} \mathcal{K} \Phi_{+}-$decompositions for $F.$ If $ P_{N_{2}} \in \mathcal{F},$ then $ P_{N_{2}^{\prime}} \in \mathcal{F}.$ 
\end{lemma}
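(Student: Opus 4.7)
The plan is to reduce this directly to Lemma \ref{l 02} via the equivalence $\mathcal{M}\mathcal{K}\Phi_{+}(\mathcal{A})=\mathcal{K}\Phi_{+}(\mathcal{A})$ established in \cite[Lemma 2.7]{IS10}. The key observation is that an $\mathcal{M}\mathcal{K}\Phi_{+}$-decomposition of $F$ is not merely a qualitative statement about $F$ being semi-Fredholm; it explicitly records a pair of orthogonal projections with respect to which $F$ is invertible up to a pair.

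First, I would translate each of the two given decompositions into an invertibility-up-to-pair statement, under the identification of $F\in B(\mathcal{A})$ with the corresponding element of $\mathcal{A}$. From the matrix form $\begin{pmatrix} F_{1} & 0 \\ 0 & F_{4} \end{pmatrix}$ with $F_{1}$ an isomorphism, one defines an almost inverse $b$ by the matrix $\begin{pmatrix} F_{1}^{-1} & 0 \\ 0 & 0 \end{pmatrix}$ with respect to the dual decomposition. Using Remark \ref{r10 r01} to replace the skew decompositions by orthogonal ones (the orthogonal complement of $N_{1}$ still complements $N_{1}$, and the restriction of $F$ to it remains an isomorphism onto the orthogonal complement of $N_{2}$), one obtains from the first decomposition that $F$ is invertible up to the pair $(P_{N_{1}},P_{N_{2}})$, and from the second that $F$ is invertible up to $(P_{N_{1}^{\prime}},P_{N_{2}^{\prime}})$. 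This is precisely the content packaged inside \cite[Lemma 2.7]{IS10}.

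Now I would apply Lemma \ref{l 02} with $p=P_{N_{1}}$, $q=P_{N_{2}}$, $p^{\prime}=P_{N_{1}^{\prime}}$, $q^{\prime}=P_{N_{2}^{\prime}}$. By the definition of $\mathcal{M}\mathcal{K}\Phi_{+}(\mathcal{A})$, both $P_{N_{1}}$ and $P_{N_{1}^{\prime}}$ belong to $\mathcal{F}$, and by hypothesis $P_{N_{2}}\in\mathcal{F}$. Therefore $p,q,p^{\prime}\in\mathcal{F}$, so Lemma \ref{l 02} forces $q^{\prime}=P_{N_{2}^{\prime}}\in\mathcal{F}$, which is exactly the conclusion.

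The only conceptual point requiring care is the first step: the $\mathcal{M}\mathcal{K}\Phi_{+}$-decompositions a priori use skew projections onto $N_{i}$, while the hypothesis of Lemma \ref{l 02} is phrased in terms of the orthogonal projections $P_{N_{i}}$ that appear in Definition \ref{d 11}. This is handled by Remark \ref{r10 r01} (every closed complementable submodule of $\mathcal{A}$ is orthogonally complementable, with the orthogonal complement again witnessing $F_{1}$ as an isomorphism) together with the equivalence proved in \cite[Lemma 2.7]{IS10}; once this identification is in place, the remainder of the proof is a one-line invocation of Lemma \ref{l 02}.
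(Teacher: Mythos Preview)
Your argument is essentially correct, but it is precisely the route the paper deliberately avoids. Immediately after the proof, the paper remarks that by \cite[Lemma 2.7]{IS10} Lemma~\ref{R12 L01} is equivalent to Lemma~\ref{l 02}, \emph{however, we have provided here a different proof}. The whole purpose of the paper is to re-derive the results of \cite{BJMA} via Hilbert-module techniques rather than by invoking them; your proof circumvents this by reducing straight back to Lemma~\ref{l 02}, which is a result \emph{from} \cite{BJMA}. The paper instead argues internally: it passes both decompositions through a common refinement $\mathrm{Im}(I-P_{\alpha})$ coming from the approximate unit (as in the proof of \cite[Theorem 2.9]{IS10}), obtains $F(\mathcal{R})\tilde\oplus N_{2}\cong F(\mathcal{R}')\tilde\oplus N_{2}'$ with $P_{F(\mathcal{R})},P_{F(\mathcal{R}')}\in\mathcal{F}$, and then concludes via \cite[Lemmas 2.1, 2.3, 2.4]{IS10}. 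Your approach is shorter and perfectly valid as a proof of the lemma in isolation; the paper's approach is longer but is the point of the exercise.

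One small inaccuracy to fix: in your parenthetical you claim that $F$ restricted to $N_{1}^{\perp}$ is an isomorphism onto $N_{2}^{\perp}$. That is false in general; $F(N_{1}^{\perp})$ is merely \emph{some} complement of $N_{2}$ (this is what \cite[Lemma 2.5]{IS10} gives), not the orthogonal one. What is true, and what you actually need for invertibility up to $(P_{N_{1}},P_{N_{2}})$, is that the composite $(I-P_{N_{2}})F|_{N_{1}^{\perp}}:N_{1}^{\perp}\to N_{2}^{\perp}$ is an isomorphism, because the orthogonal projection $I-P_{N_{2}}$ restricted to any complement of $N_{2}$ is an isomorphism onto $N_{2}^{\perp}$. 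With that correction your reduction to Lemma~\ref{l 02} goes through.
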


\begin{proof}
	By exactly the same arguments as in the first part of the proof of \cite[Theorem 2.9]{IS10}  we can find a sufficiently large $ \alpha$  and closed submodules $\mathcal{R}$ and $\mathcal{R}^{\prime}$ of $\mathcal{A}$ such that $Im(I-P_{\alpha}) \tilde{\oplus} \mathcal{R} \cong M_{1},$ $Im(I-P_{\alpha}) \tilde{\oplus} \mathcal{R}^{\prime} \cong M_{1}^{\prime}$ and
	
	$$ \mathcal{A}= Im(I-P_{\alpha}) \tilde{\oplus} ( \mathcal{R} \tilde{\oplus} N_{1} ) \stackrel{F}{\longrightarrow}   
	F (  Im(I-P_{\alpha})) \tilde{\oplus} ( F (\mathcal{R}) \tilde{\oplus} N_{2}    ) = \mathcal{A} ,$$
	
	$$ \mathcal{A}= Im(I-P_{\alpha}) \tilde{\oplus} ( \mathcal{R}^{\prime} \tilde{\oplus} N_{1}^{\prime}  ) \stackrel{F}{\longrightarrow}   
	F (  Im(I-P_{\alpha})) \tilde{\oplus} ( F (\mathcal{R}^{\prime} ) \tilde{\oplus} N_{2}^{\prime} ) = \mathcal{A}$$
	
	are two $\mathcal{M} \mathcal{K} \Phi_{+}-$decompositions for $F,$ where $\mathcal{R}  \cong F (\mathcal{R}),$ $\mathcal{R}^{\prime}  \cong F (\mathcal{R}^{\prime})$ and $P_{\mathcal{R}}, P_{\mathcal{R}^{\prime}} \in \mathcal{F}. $ Indeed, we are in the position to apply the same arguments as in the first part of the proof of \cite[Theorem 2.9]{IS10} because $ P_{N_{1} } , P_{N_{1}^{\prime}} \in \mathcal{F} .$ It follows that $F (\mathcal{R}) \tilde{\oplus} N_{2} \cong  F (\mathcal{R}^{\prime}) \tilde{\oplus} N_{2}^{\prime} $ and from \cite[Lemma 2.1]{IS10} we also have that  $P_{F(\mathcal{R})} , P_{F(\mathcal{R}^{\prime})} \in \mathcal{F}.$ Hence, if $P_{N_{2}} \in \mathcal{F},$ from \cite[Lemma 2.4]{IS10} we deduce that $P_{F(\mathcal{R}) \tilde{\oplus} N_{2} } \in \mathcal{F}.$ Since $F(\mathcal{R}) \tilde{\oplus} N_{2} \cong F (\mathcal{R}^{\prime}) \tilde{\oplus} N_{2}^{\prime},$ from \cite[Lemma 2.3]{IS10} we obtain that  $P_{F(\mathcal{R^{\prime}}) \tilde{\oplus} N_{2}^{\prime} } \in \mathcal{F}.$ Therefore, $P_{N_{2}^{\prime}} \in \mathcal{F} $ since $P_{N_{2}^{\prime}} \leq P_{F(\mathcal{R^{\prime}}) \tilde{\oplus} N_{2}^{\prime} } $ and $\mathcal{F}$ is an ideal.
\end{proof}
Again, by \cite[Lemma 2.7]{IS10} it follows that Lemma \ref{R12 L01} is equivalent to Lemma \ref{l 02},
however, we have provided here a different proof which is motivated by the proof of \cite[Lemma 2.16]{IS1}.\\
Next, we provide some characterizations of $ \mathcal{M} \mathcal{K} \Phi_{+} $ and $ \mathcal{M} \mathcal{K} \Phi_{-} - $ operators.
\begin{lemma}\label{closed}
	Let $F \in B(\mathcal{A}) $ and suppose that $Im F$ is closed. Then\\
	a) $ F \in  \mathcal{M} \mathcal{K} \Phi_{+} (\mathcal{A})$ if and only if $P_{\ker F} \in \mathcal{F} ,$\\
	b) $ F \in  \mathcal{M} \mathcal{K} \Phi_{-} (\mathcal{A})$ if and only if $P_{(Im F)^{\perp}} \in \mathcal{F} .$
\end{lemma}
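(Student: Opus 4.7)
Plan. The two ``if'' directions are essentially immediate. Since $Im F$ is closed, \cite[Theorem 2.3.3]{MT} yields that both $\ker F$ and $Im F$ are orthogonally complementable, so we have the decomposition
$$\mathcal{A} = (\ker F)^{\perp} \oplus \ker F \stackrel{F}{\longrightarrow} Im F \oplus (Im F)^{\perp} = \mathcal{A},$$
with respect to which $F$ is block-diagonal of the form $\begin{pmatrix} F_{1} & 0\\ 0 & 0\end{pmatrix}$, and the $(1,1)$-block $F|_{(\ker F)^{\perp}}\to Im F$ is an isomorphism (injective by definition, surjective by closed range). If $P_{\ker F}\in\mathcal{F}$, taking $N_{1}=\ker F$ exhibits this as an $\mathcal{M}\mathcal{K}\Phi_{+}$-decomposition; if $P_{(Im F)^{\perp}}\in\mathcal{F}$, taking $N_{2}=(Im F)^{\perp}$ makes it an $\mathcal{M}\mathcal{K}\Phi_{-}$-decomposition.

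For the ``only if'' of (a), I would fix an $\mathcal{M}\mathcal{K}\Phi_{+}$-decomposition $\mathcal{A}=M_{1}\tilde{\oplus}N_{1}\to M_{2}\tilde{\oplus}N_{2}$ for $F$. Since the $(1,1)$-block is an isomorphism, any vector in $\ker F$ must have zero component in $M_{1}$, hence $\ker F\subseteq N_{1}$. Both $\ker F$ (by \cite[Theorem 2.3.3]{MT}) and $N_{1}$ (as a topological summand, by Remark \ref{r10 r01}) are orthogonally complementable; the inclusion $\ker F\subseteq N_{1}$ gives $N_{1}^{\perp}\subseteq (\ker F)^{\perp}$, and therefore $P_{\ker F}=P_{\ker F}P_{N_{1}}$. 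Since $P_{N_{1}}\in\mathcal{F}$ and $\mathcal{F}$ is an ideal, $P_{\ker F}\in\mathcal{F}$.

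For the ``only if'' of (b), fix an $\mathcal{M}\mathcal{K}\Phi_{-}$-decomposition. Because $F$ restricts to an isomorphism $M_{1}\to M_{2}$, one has $M_{2}\subseteq Im F$, so $(Im F)^{\perp}\subseteq M_{2}^{\perp}$. The key step is to transfer the finite-type property from $P_{N_{2}}\in\mathcal{F}$ (the \emph{skew} complement of $M_{2}$ in $\mathcal{A}$) to $P_{M_{2}^{\perp}}$ (the \emph{orthogonal} complement). I would verify that the restriction $P_{M_{2}^{\perp}}|_{N_{2}}\colon N_{2}\to M_{2}^{\perp}$ is an $\mathcal{A}$-linear isomorphism (injectivity from $M_{2}\cap N_{2}=\{0\}$, surjectivity by a direct decomposition argument inside $\mathcal{A}=M_{2}\tilde{\oplus}N_{2}$), and then invoke \cite[Lemma 2.3]{IS10} to conclude $P_{N_{2}}\sim P_{M_{2}^{\perp}}$, yielding $P_{M_{2}^{\perp}}\in\mathcal{F}$. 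The proof then closes exactly as in (a): since $(Im F)^{\perp}\subseteq M_{2}^{\perp}$ are both orthogonally complementable, $P_{(Im F)^{\perp}}=P_{(Im F)^{\perp}}P_{M_{2}^{\perp}}\in\mathcal{F}$.

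The principal obstacle is precisely this skew-versus-orthogonal complement comparison in part (b); part (a) avoids it because the relevant inclusion of orthogonal complements is at hand directly. Everything else reduces to \cite[Theorem 2.3.3]{MT}, Remark \ref{r10 r01}, and the ideal structure of $\mathcal{F}$ together with the MVN-transfer lemma from \cite{IS10}.
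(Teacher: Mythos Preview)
Your argument is correct. The ``if'' directions and the ``only if'' of (a) are exactly right: the diagonal form forces $\ker F\subseteq N_{1}$, and then $P_{\ker F}=P_{\ker F}P_{N_{1}}\in\mathcal{F}$ by the ideal property. For (b), your skew-to-orthogonal transfer is the only nontrivial point, and your verification that $P_{M_{2}^{\perp}}|_{N_{2}}:N_{2}\to M_{2}^{\perp}$ is a bicontinuous $\mathcal{A}$-linear isomorphism is valid (injectivity from $M_{2}\cap N_{2}=\{0\}$, surjectivity from $\mathcal{A}=M_{2}\tilde{\oplus}N_{2}$, boundedness of the inverse by the open mapping theorem). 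One minor remark: the lemma in \cite{IS10} that yields $P_{N_{2}}\sim P_{M_{2}^{\perp}}$ from an isomorphism of complementable submodules is cited in this paper as \cite[Lemma 2.1]{IS10} rather than \cite[Lemma 2.3]{IS10}; the content you need is there regardless.

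The paper's own proof simply invokes the canonical decomposition from \cite[Theorem 2.3.3]{MT} and then defers entirely to the proof of \cite[Lemma 3.1.21]{Ths}, so a line-by-line comparison is not possible. Your write-up is in the same spirit but is self-contained, making the skew/orthogonal comparison in (b) explicit rather than hidden in a thesis reference. An alternative you might note for (b) is the adjoint route: once (a) is established, one can check that $F\in\mathcal{M}\mathcal{K}\Phi_{-}(\mathcal{A})$ iff $F^{*}\in\mathcal{M}\mathcal{K}\Phi_{+}(\mathcal{A})$ (using Remark~\ref{r10 r01} and \cite[Lemma 2.1]{IS10} to pass to orthogonal decompositions before taking adjoints), and then apply (a) to $F^{*}$ together with $\ker F^{*}=(Im F)^{\perp}$. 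Your direct argument is equally clean.
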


\begin{proof}
	Since $Im F$ is closed, by \cite[Theorem 2.3.3]{MT} we have that $$ \mathcal{A} =( \ker F)^{\perp} \oplus F = Im F \oplus (ImF)^{\perp} .$$ Then we can proceed in exactly the same way as in the proof of \cite[Lemma 3.1.21]{Ths}.
\end{proof}
\begin{lemma}\label{boundedbelow}
	Let $ F \in B
	(\mathcal{A}) .$ Then $F \in  \mathcal{M} \mathcal{K} \Phi_{+} (\mathcal{A}) $ if and only if there exists a complementable submodule $M$ of $\mathcal{A} $ such that $F_{\mid_{M^{\perp}}} $ is bounded below and $ P_{M} \in \mathcal{F}.$
\end{lemma}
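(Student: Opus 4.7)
The plan is to treat the two implications separately, taking $M = N_1$ from the given decomposition for the forward direction and performing a change of basis on the domain for the converse.

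For $(\Rightarrow)$, starting from a diagonal decomposition $\mathcal{A} = M_1 \tilde\oplus N_1 \stackrel{F}{\longrightarrow} M_2 \tilde\oplus N_2 = \mathcal{A}$ with $F_1$ an isomorphism and $P_{N_1} \in \mathcal{F}$, I would set $M := N_1$; by Remark \ref{r10 r01}, $N_1$ is orthogonally complementable, so $P_M \in \mathcal{F}$ is immediate, and it remains to check that $F|_{N_1^\perp}$ is bounded below. I plan to do this via the open mapping theorem, after establishing injectivity and closed range of $F|_{N_1^\perp}$. Injectivity is immediate: any $x \in N_1^\perp \cap \ker F$ decomposes skewly as $x = m + n$ with $F_1 m \in M_2$, $F_4 n \in N_2$, forcing $m = 0$ (since $F_1$ is an isomorphism and $M_2 \cap N_2 = 0$), whence $x = n \in N_1 \cap N_1^\perp = \{0\}$. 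For closed range, given $F(x_k) \to y$ with $x_k = m_k + n_k \in N_1^\perp$, applying the bounded skew projection $\Pi_2$ of $\mathcal{A}$ onto $M_2$ along $N_2$ forces $F_1(m_k)$ to converge, hence $m_k \to m$ in $M_1$; combined with the identity $x_k = (I - P_{N_1})(m_k)$ (valid since $P_{N_1}(x_k) = 0$ while $n_k \in N_1$), this yields $x_k \to (I - P_{N_1})(m) \in N_1^\perp$ with image $y$ under $F$.

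For $(\Leftarrow)$, I would set $M_1 := M^\perp$ and $N_1 := M$; since $F|_{M_1}$ is bounded below, the image $M_2 := F(M_1)$ is closed and, by \cite[Theorem 2.3.3]{MT}, orthogonally complementable. Taking $N_2 := M_2^\perp$, the operator $F$ acquires a block matrix $\bigl(\begin{smallmatrix} F_1 & F_3 \\ 0 & F_4 \end{smallmatrix}\bigr)$ with $F_1 := F|_{M_1} \colon M_1 \to M_2$ an isomorphism, but in general $F_3 := P_{M_2} F|_{N_1}$ is nonzero. To kill $F_3$, I would apply the change of basis $T := I - F_1^{-1} F_3 P_{N_1} \in B(\mathcal{A})$; since its square vanishes (its image lies in $M_1 = N_1^\perp$, where $P_{N_1} = 0$), $T$ is invertible with inverse $I + F_1^{-1} F_3 P_{N_1}$, fixes $M_1$ pointwise, and sends $N_1$ bijectively onto a closed complementable submodule $N_1' := T(N_1)$. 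A short computation shows $F(T(n)) = F_4(n) \in N_2$ for $n \in N_1$, so $F$ becomes diagonal with respect to $M_1 \tilde\oplus N_1' \to M_2 \oplus N_2$.

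The main obstacle I anticipate is verifying $P_{N_1'} \in \mathcal{F}$, since the diagonalizing change of basis does not obviously preserve this property. I would argue that $T|_{N_1} \colon N_1 \to N_1'$ is an isomorphism of Hilbert modules between two orthogonally complementable submodules (the latter by Remark \ref{r10 r01}), and invoke \cite[Lemma 2.3]{IS10} together with the hypothesis $P_{N_1} = P_M \in \mathcal{F}$ to transfer $\mathcal{F}$-membership to $P_{N_1'}$, thereby completing the $\mathcal{M}\mathcal{K}\Phi_+$-decomposition for $F$.
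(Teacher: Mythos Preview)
Your proof is correct and follows essentially the same strategy as the paper's: for $(\Rightarrow)$ you take $M=N_1$ and show $F|_{N_1^\perp}$ is bounded below, and for $(\Leftarrow)$ you diagonalize the upper-triangular block matrix by a shear on the domain. The only real difference is that the paper outsources both steps to external lemmas (\cite[Lemma 2.5]{IS10} for the forward direction, and the combination of \cite[Lemma 3.1]{IS1} with the diagonalization trick of \cite[Lemma 2.7.10]{MT} for the converse), whereas you write out the underlying computations explicitly; in particular, your identity $x_k=(I-P_{N_1})m_k$ and your explicit shear $T=I-F_1^{-1}F_3 P_{N_1}$ are precisely what those cited lemmas encapsulate.
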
 
\begin{proof}
	If such $M$ exists, then by exactly the same arguments as in the proof of \cite[Lemma 3.1]{IS1} we can deduce that $F$ has the matrix 
	$\left\lbrack
	\begin{array}{ll}
		F_{1} & F_{2} \\
		0 & F_{4} \\
	\end{array}
	\right \rbrack
	$
	with respect to the decomposition 
	$$\mathcal{A} = M^{\prime} \oplus {{M^{\prime}}^{\bot}} \stackrel{F}{\longrightarrow} F(M^{\prime}) \oplus F( {M^{\prime}})^{\bot}=\mathcal{A} ,$$
	where $F_{1}$ is an isomorphism. Hence, by the method from the proof of \cite[Lemma 2.7.10]{MT} we can construct an isomorphism $U$ such that $F$ has the matrix
	$\left\lbrack
	\begin{array}{ll}
		F_{1} & 0 \\
		0 & \tilde F_{4} \\
	\end{array}
	\right \rbrack
	$ with respect to the decomposition 
	$$\mathcal{A} = M \tilde \oplus \mathcal{U}(M^{\perp}) \stackrel{F}{\longrightarrow}  F(M)  \oplus F(M)^{\perp}= \mathcal{A} .$$ 
	Now, since $\mathcal{U}$ is an isomorphism, by \cite[Lemma 2.1]{IS10}  it follows that $P_{\mathcal{U}(M^{\perp})} \in \mathcal{F} .$ Thus, we have obtained an $ \mathcal{M} \mathcal{K} \Phi_{+}- $ decomposition for $F .$
	
	 On the other hand, if 
	$$\mathcal{A} = M_{1} \tilde \oplus N_{1} \stackrel{F}{\longrightarrow}  M_{2} \tilde \oplus N_{2}= \mathcal{A} $$ 
	is an $\mathcal{M} \mathcal{K} \Phi_{+}  -$ decomposition for $F,$ then by \cite[Lemma 2.5]{IS10} we have that 
	$$\mathcal{A} = N_{1}^{\perp} \oplus N_{1} {\longrightarrow}  F(N_{1}^{\perp}) \tilde \oplus N_{2}= \mathcal{A} $$  
	is also an $\mathcal{M} \mathcal{K} \Phi_{+}  -$decomposition for $F.$ In particular, $F_{\mid_{N_{1}^{\perp}}} $ is bounded below. 
\end{proof}
The following auxiliary technical lemma will enable us to connect Lemma \ref{closed} and Lemma \ref{boundedbelow} with the results from the previous section. 
\begin{lemma}\label{related}
	Let $ F \in B(\mathcal{A}) .$ Then $F$ has closed image if and only if there exist orthogonal projections $P,Q \in B(\mathcal{A}) $ such that $ (I-Q) F (I-P)=F$ and $F$ is invertible up to $(P,Q) .$ In this case, $P=P_{\ker F} $ and $Q=P_{Im F^{\perp}} .$ Consequently,if $N$ is a closed and complementable submodule of $\mathcal{A} ,$ then $F_{\mid_{N^{\perp}}} $ is bounded below if and only if $F$ is left invertible up to $ P_{N} .$
\end{lemma}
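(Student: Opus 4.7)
The strategy is to prove the main biconditional by a direct construction, then use it (applied to an auxiliary operator) to derive the ``consequently'' statement.

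\textbf{Forward direction of the main iff.} Assume $\operatorname{Im} F$ is closed. By \cite[Theorem 2.3.3]{MT} both $\ker F$ and $\operatorname{Im} F$ are orthogonally complementable, so
$$\mathcal{A} = \ker F \oplus (\ker F)^{\perp} = \operatorname{Im} F \oplus (\operatorname{Im} F)^{\perp}.$$
Set $P := P_{\ker F}$ and $Q := P_{(\operatorname{Im} F)^{\perp}}$. The restriction $F|_{(\ker F)^{\perp}} : (\ker F)^{\perp}\to \operatorname{Im} F$ is a bijective adjointable map with closed range, hence invertible with bounded inverse. Let $G\in B(\mathcal{A})$ be this inverse extended by zero on $(\operatorname{Im} F)^{\perp}$. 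A direct check shows $(I-Q)F(I-P)=F$ (since $I-P$ projects onto $(\ker F)^{\perp}$ and $F$ maps into $\operatorname{Im} F$), and then $FG = I-Q$ and $GF = I-P$, which rewrite as the two required identities for $(P,Q)$-invertibility.

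\textbf{Backward direction and uniqueness.} Suppose $(I-Q)F(I-P)=F$ and some $G\in B(\mathcal{A})$ satisfies the $(P,Q)$-invertibility relations. Using $(I-Q)F(I-P)=F$, these simplify to $FG=I-Q$ and $GF=I-P$. From $F=FGF$ we obtain $\operatorname{Im} F = \operatorname{Im}(I-Q)$, which is the range of an orthogonal projection, hence closed. Moreover, $Fx=0 \Leftrightarrow GFx=0 \Leftrightarrow (I-P)x=0 \Leftrightarrow x\in \operatorname{Im} P$, so $\ker F = \operatorname{Im} P$ and therefore $P = P_{\ker F}$. Similarly $(\operatorname{Im} F)^{\perp} = \operatorname{Im} Q$, whence $Q = P_{(\operatorname{Im} F)^{\perp}}$.

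\textbf{Consequently part.} Let $N$ be closed and complementable; by Remark \ref{r10 r01}, $N$ is orthogonally complementable and $P_N$ is an orthogonal projection. Assume first that $F|_{N^{\perp}}$ is bounded below. Introduce the auxiliary operator $\tilde F := F(I-P_N)$; then $\ker \tilde F = N$ (using injectivity from boundedness below) and $\operatorname{Im} \tilde F = F(N^{\perp})$ is closed (a Cauchy sequence in $F(N^{\perp})$ pulls back, via the bound $\|F x\|\geq c\|x\|$ on $N^{\perp}$, to a Cauchy sequence in $N^{\perp}$). Apply the forward direction of the main iff to $\tilde F$: there exist orthogonal projections $\tilde P,\tilde Q$ and some $G\in B(\mathcal{A})$ such that $\tilde F$ is invertible up to $(\tilde P,\tilde Q)$, and by uniqueness $\tilde P = P_{\ker \tilde F} = P_N$. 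From $G(I-\tilde Q)\tilde F(I-P_N) = I-P_N$ and idempotency of $I-P_N$, setting $G' := G(I-\tilde Q)$ gives $G' F (I-P_N) = I-P_N$, so $F$ is left invertible up to $P_N$. Conversely, if $G' F (I-P_N)=I-P_N$, then for $x\in N^{\perp}$ we have $x = G'F(x)$, so $\|x\|\leq \|G'\|\,\|F(x)\|$, i.e., $F|_{N^{\perp}}$ is bounded below.

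\textbf{Main obstacle.} The core content is the forward direction, which rests on the orthogonal splitting from \cite[Theorem 2.3.3]{MT}; once that is in hand, the backward direction and uniqueness are algebraic. The only nonroutine move is in the ``consequently'' part: one must pass from $F$ to the auxiliary $\tilde F = F(I-P_N)$ in order to make $N$ the actual kernel, and then invoke the uniqueness $\tilde P = P_N$ to extract the desired left inverse of $F$ up to $P_N$.
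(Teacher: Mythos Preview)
Your proof is correct and follows essentially the same route as the paper: both directions of the main biconditional hinge on \cite[Theorem 2.3.3]{MT} and the resulting block decomposition (you phrase it via the equations $FG=I-Q$, $GF=I-P$ rather than matrices, but the content is identical), and for the ``consequently'' part you pass to the auxiliary operator $\tilde F = F(I-P_N)=FP_{N^\perp}$ exactly as the paper does. One cosmetic slip: in your chain ``$Fx=0 \Leftrightarrow GFx=0$'' the first arrow is only a $\Rightarrow$; the reverse implication comes instead from $F=F(I-P)$, which you use implicitly anyway.
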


\begin{proof}
	If $P, Q$ are orthogonal projection such that $ (I-Q) F (I-P)=F $ then $F$ has the matrix 
	$\left\lbrack
	\begin{array}{ll}
		F_{1} & 0 \\
		0 & 0 \\
	\end{array}
	\right \rbrack
	$ 
	with respect to the decomposition 
	$$\mathcal{A} = Im (I-P)  \oplus Im P \stackrel{F}{\longrightarrow} Im (I-Q)  \oplus Im Q= \mathcal{A}.$$
	If in addition there exists some $D$ such that $DF=I-P ,$ then we must have that $F_{\mid_{Im(I-P)}} $ is bounded below. Indeed, for all $x \in Im (I-P) $ it holds then that $ \parallel x \parallel = \parallel (DF) x \parallel  \leq \parallel D \parallel \parallel Fx \parallel,$ so $\parallel Fx \parallel \geq \dfrac{1}{\parallel D \parallel} \parallel x \parallel.$ Hence, $\ker F = Im P $ in this case.  Finally, if $FD = I-Q ,$ then $Im (I-Q) \subseteq Im F ,$ thus $Im F = Im F_{1} = Im (I-Q) ,$ which proves the implication in one direction.
	
	Conversely, if $Im F$ is closed, then by \cite[Theorem 2.3.3]{MT}
	we get that $Im F$ and $\ker F$ are orthogonally complementable in $\mathcal{A} .$ Thus, $F$ has the matrix 
	$\left\lbrack
	\begin{array}{ll}
		F_{1} & 0 \\
		0 & 0 \\
	\end{array}
	\right \rbrack
	$ 
	with respect to the decomposition 
	$$\mathcal{A} = Im (I-P)  \oplus Im P \stackrel{F}{\longrightarrow} Im (I-Q)  \oplus Im Q= \mathcal{A},$$
	where $P=P_{\ker F} $ and $Q=P_{Im F^{\perp}} $ and by the Banach open mapping theorem it follows that $F_{1}$ is an isomorphism since $Im F$ is closed. Now, it is not hard to deduce that the operator with the matrix 
	$\left\lbrack
	\begin{array}{ll}
		F_{1}^{-1} & 0 \\
		0 & 0 \\
	\end{array}
	\right \rbrack
	$ 
	with respect to the decomposition 
	$$\mathcal{A} = Im (I-Q)  \oplus Im Q {\longrightarrow} Im (I-Q)  \oplus Im P= \mathcal{A},$$ 
	is the desired $(P,Q)-$inverse of $F.$\\
	Finally, if $N$ is a closed and complementable submodule of $\mathcal{A} $ and $F_{\mid_{N^{\perp}}} $ is bounded below, then $ FP_{N^{\perp}} $ has closed image and $ \ker FP_{N^{\perp}} = N .$ By previous arguments, it follows that $F$ is left invertible up to  $P_{N} .$ On the other hand, if $F$ is left invertible up to  $P_{N} ,$ then by definition there exists some $ D \in B(\mathcal{A}) $ such that $ DFP_{N^{\perp}} = P_{N^{\perp}} .$ By the similar arguments as above, one can deduce then that $F_{\mid_{N^{\perp}}} $ must be bounded below in this case.
\end{proof}
It follows now from Lemma \ref{related} that Lemma \ref{closed} is equivalent to Lemma \ref{l 06}, whereas Lemma \ref{boundedbelow} is equivalent to Lemma \ref{l 07}, however, we have provided here different proofs from those given in \cite{BJMA}.\\ 
The next proposition is motivated by \cite[Lemma 3.1.13]{Ths}.
\begin{proposition}
	Let $F \in B(\mathcal{A}) $ and suppose that $\Pi $ is a skew or an orthogonal projection such that $ ( I -\Pi) \in \mathcal{F} .$ Then $F \in  \mathcal{M} \mathcal{K} \Phi (\mathcal{A}),$ if and only if there exists a decomposition 
	$$Im \Pi = M \tilde \oplus N \stackrel{\Pi F}{\longrightarrow}  M^{\prime} \tilde \oplus N^{\prime} = Im \Pi $$
	with respect to which the operator $\Pi F_{\mid_{Im \Pi}} $ has the matrix 
	$
	\left\lbrack
	\begin{array}{ll}
		(\Pi F)_{1} & 0 \\
		0 & (\Pi F)_{4} \\
	\end{array}
	\right \rbrack
	$
	where $	(\Pi F)_{1} $ is an isomorphism and $ P_{N}, P_{N^{\prime}} \in \mathcal{F}.$ Moreover, in this case $$index F=[ P_{N}]-[P_{N^{\prime}}] .$$ 
\end{proposition}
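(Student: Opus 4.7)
The plan is to pivot to the compressed operator $G := \Pi F \Pi$ and exploit the decomposition $\mathcal{A} = Im\Pi \tilde{\oplus} Im(I-\Pi)$, with respect to which $G$ vanishes on $Im(I-\Pi)$ and agrees with $\Pi F|_{Im\Pi}$ on $Im\Pi$. First I would note that $F - G = (I-\Pi)F + \Pi F(I-\Pi) \in \mathcal{F}$ (since $I - \Pi \in \mathcal{F}$ and $\mathcal{F}$ is a two-sided ideal). By the perturbation-invariance of $\mathcal{M}\mathcal{K}\Phi(\mathcal{A}) = \mathcal{K}\Phi(\mathcal{A})$ and of the index under finite-type perturbations (a core axiomatic result, available in the Hilbert-module setting via \cite[Lemma 2.7]{IS10}), $F \in \mathcal{M}\mathcal{K}\Phi(\mathcal{A})$ if and only if $G \in \mathcal{M}\mathcal{K}\Phi(\mathcal{A})$, with equal indices. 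So it suffices to characterize membership of $G$ in terms of $\Pi F|_{Im\Pi}$.

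For the \textbf{``if''} direction, given the decomposition $Im\Pi = M \tilde{\oplus} N \stackrel{\Pi F}{\longrightarrow} M' \tilde{\oplus} N' = Im\Pi$ with $(\Pi F)_1$ an isomorphism and $P_N, P_{N'} \in \mathcal{F}$, I would form
$$\mathcal{A} = M \tilde{\oplus} (N \tilde{\oplus} Im(I-\Pi)) \stackrel{G}{\longrightarrow} M' \tilde{\oplus} (N' \tilde{\oplus} Im(I-\Pi)) = \mathcal{A},$$
with respect to which $G$ is block-diagonal with upper block exactly $(\Pi F)_1$ (since $G|_M = \Pi F|_M$ while $G$ vanishes on the $Im(I-\Pi)$ part). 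Because $I - \Pi \in \mathcal{F}$, a combination of Remark \ref{r10 r01} and \cite[Lemma 2.1]{IS10} yields $P_{Im(I-\Pi)} \in \mathcal{F}$, and then \cite[Lemma 2.4]{IS10} gives $P_{N \tilde{\oplus} Im(I-\Pi)}, P_{N' \tilde{\oplus} Im(I-\Pi)} \in \mathcal{F}$, completing an $\mathcal{M}\mathcal{K}\Phi$-decomposition of $G$.

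For the converse, assume $G \in \mathcal{M}\mathcal{K}\Phi(\mathcal{A})$; then $Im\,G$ is closed and, by Lemma \ref{closed} (applied in both directions), $P_{\ker G}$ and $P_{(Im\,G)^{\perp}}$ lie in $\mathcal{F}$. Writing $E := Im\Pi$, $K := \ker(\Pi F|_E)$, $L := Im(\Pi F|_E)$, a short calculation gives $\ker G = K \tilde{\oplus} Im(I-\Pi)$ and $Im\,G = L$, whence $(Im\,G)^{\perp} = L^{\perp_E} \oplus E^{\perp}$ as an orthogonal sum in $\mathcal{A}$, with $L^{\perp_E}$ the orthogonal complement of $L$ in $E$. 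Since $K \subset \ker G$ and $L^{\perp_E} \subset (Im\,G)^{\perp}$, the ideal property of $\mathcal{F}$ yields $P_K, P_{L^{\perp_E}} \in \mathcal{F}$. The Banach open mapping theorem then supplies an isomorphism $\Pi F|_{K^{\perp_E}} \colon K^{\perp_E} \to L$, so taking $M := K^{\perp_E}$, $N := K$, $M' := L$, $N' := L^{\perp_E}$ produces the required decomposition of $\Pi F|_{Im\Pi}$.

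Finally, using the decomposition of $G$ constructed in the ``if'' direction,
$$\text{index}\, F = \text{index}\, G = [P_{N \tilde{\oplus} Im(I-\Pi)}] - [P_{N' \tilde{\oplus} Im(I-\Pi)}] = [P_N] - [P_{N'}]$$
in $K(\mathcal{F})$, the $[P_{Im(I-\Pi)}]$ contributions cancelling by the additivity $[P_{A \tilde{\oplus} B}] = [P_A] + [P_B]$ for internal direct sums of complementable submodules with $\mathcal{F}$-projections. The principal obstacle is precisely this bookkeeping with non-orthogonal internal direct sums when $\Pi$ is merely a skew projection: one must verify $P_{Im(I-\Pi)} \in \mathcal{F}$ in that generality, and that the Murray--von Neumann classes add correctly across non-orthogonal sums, which is exactly where the technical lemmas of \cite{IS10} become indispensable.
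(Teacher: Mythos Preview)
Your ``if'' direction and the index computation are essentially correct, and in fact slightly cleaner than the paper's version: by passing to $G=\Pi F\Pi$ you get the block-diagonal form on $\mathcal{A}=M\tilde\oplus(N\tilde\oplus\ker\Pi)$ for free, whereas the paper works with $F$ itself, obtains a full $2\times 2$ matrix, and must invoke the diagonalization trick from \cite[Lemma 2.7.10]{MT}.

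The ``only if'' direction, however, has a genuine gap. You assert that $G\in\mathcal{M}\mathcal{K}\Phi(\mathcal{A})$ implies $Im\,G$ is closed, and then invoke Lemma~\ref{closed}. But membership in $\mathcal{M}\mathcal{K}\Phi(\mathcal{A})$ does \emph{not} force closed range: an $\mathcal{M}\mathcal{K}\Phi$-decomposition makes $G$ block-diagonal as $\mathrm{diag}(G_1,G_4)$ with $G_1$ an isomorphism, but $G_4\colon N_1\to N_2$ is completely unconstrained, so $Im\,G=M_2\tilde\oplus G_4(N_1)$ need not be closed. Lemma~\ref{closed} takes closed range as a \emph{hypothesis}, not a conclusion. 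This is exactly the point where Hilbert-module Fredholm theory diverges from the classical Hilbert-space case. The paper circumvents this by never appealing to $\ker G$ or $(Im\,G)^\perp$: it starts from an arbitrary $\mathcal{M}\mathcal{K}\Phi$-decomposition $\mathcal{A}=M\tilde\oplus N\to M'\tilde\oplus N'$ of $\Pi F\Pi$, shows that $\Pi_{\mid M}$ is bounded below (hence $\Pi(M)$ is closed), and then uses the arguments of \cite[Lemma 3.1.13]{Ths} together with Lemma~\ref{R12 L01} to manufacture complements $\tilde N,\tilde N'$ \emph{inside} $Im\,\Pi$ with $P_{\tilde N},P_{\tilde N'}\in\mathcal{F}$.

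Even granting closed range, there is a secondary issue: to write $P_K$ for $K=\ker(\Pi F_{\mid E})$ you need $K$ to be orthogonally complementable in $\mathcal{A}$. You know $\ker G=K\tilde\oplus\ker\Pi$ is orthogonally complementable, but when $\Pi$ is merely skew this internal sum is not orthogonal, and extracting $P_K$ from $P_{\ker G}$ is not automatic (intersections of complementable submodules need not be complementable).
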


\begin{proof}
	We notice first that if $Im \Pi = M \tilde \oplus N = M^{\prime} \tilde \oplus N^{\prime}$ then clearly $N$ and $ N^{\prime} $ are complementable in $ \mathcal{A} $ because $ \mathcal{A}= Im \Pi \tilde{\oplus} \ker \Pi.$ Hence, by Remark \ref{r10 r01}, $N$ and $N^{\prime} $ are orthogonally complementable in $\mathcal{A} ,$ so $P_{N} $ and $P_{N}^{\prime}  $ are well-defined in this case. \\
	Next, since $ \Pi = I -( I - \Pi)$ and $ ( I -\Pi) \in \mathcal{F} ,$ by \cite[Theorem 2.12]{IS10} it follows that $\Pi \in  \mathcal{M} \mathcal{K} \Phi (\mathcal{A}).$\\ 
	Suppose that $F \in  \mathcal{M} \mathcal{K} \Phi (\mathcal{A}).$ By \cite[Proposition 2.10]{IS10} we deduce that $\Pi F \Pi \in \mathcal{M} \mathcal{K} \Phi (\mathcal{A}).$

	Let 
	$$ \mathcal{A} = M \tilde \oplus N \stackrel{\Pi F \Pi}{\longrightarrow}  M^{\prime} \tilde \oplus N^{\prime} = \mathcal{A} $$
	be an $\mathcal{M} \mathcal{K} \Phi-$decomposition for $\Pi F \Pi.$ Since  $\Pi F \Pi_{\vert_{M}} $ is an isomorphism, it follows that $ \Pi_{\vert_{M}}$ must be bounded below, so $\Pi (M)$ is closed. Furthermore, since $M$ is orthogonally complementable by Remark \ref{r10 r01} and $\Pi$ is adjointable by \cite[Proposition 2.5.2]{MT}, by the same arguments as in the proof of \cite[Lemma 3.1.13]{Ths} we obtain that $\Pi(M) \oplus \tilde{N}=Im \Pi$ for some closed submodule $\tilde{N}.$ Hence, following further the arguments from the proof of \cite[Lemma 3.1.13]{Ths}, we get that $\Pi$ has the matrix 
	$\left\lbrack
		\begin{array}{ll}
		\Pi_{1} & 0 \\
		0 & \Pi_{4} \\
		\end{array}
	\right \rbrack
	$
	 with respect the decomposition
	$$ \mathcal{A} = M \tilde \oplus \mathcal{U}(N) \stackrel{\Pi}{\longrightarrow}  \Pi(M) \tilde \oplus (\tilde{N} \tilde{\oplus} \ker \Pi )= \mathcal{A} $$
	where $\Pi_{1}$ and $\mathcal{U}$ are isomorphisms. Since $P_{N} \in \mathcal{F} $ and $\mathcal{U}$ is an isomorphism, by \cite[Lemma 2.1]{IS10} we have that $P_{\mathcal{U}(N)} \in \mathcal{F}.$ From Lemma \ref{R12 L01} we get that $P_{\tilde{N} \tilde{\oplus} \ker \Pi} \in \mathcal{F} $ because $\Pi \in  \mathcal{M} \mathcal{K} \Phi (\mathcal{A}).$ Then we must have that $P_{\tilde{N}} \in \mathcal{F} $ because $ P_{\tilde{N}} \leq P_{\tilde{N} \tilde{\oplus} \ker \Pi} $ and $ \mathcal{F} $ is an ideal.\\
	 Next, since $M^{\prime}$ is orthogonally complementable by Remark \ref{r10 r01}, then, as $M^{\prime} \subseteq Im \Pi,$ by \cite[Lemma 2.6]{IS3} we deduce that $M \oplus \tilde{N}^{\prime} =Im \Pi$ for some closed submodule $\tilde{N}^{\prime}.$ Thus, we get 
	$$\mathcal{A} = M^{\prime} \tilde \oplus N^{\prime} =  M^{\prime} \tilde \oplus \tilde{N}^{\prime} \tilde{\oplus} \ker \Pi,$$
	which gives $N^{\prime} \cong \tilde{N}^{\prime} \tilde{\oplus} \ker \Pi.$ By \cite[lemma 2.1]{IS10} we have $P_{\tilde{N}^{\prime} \tilde{\oplus} \ker \Pi} \sim P_{N^{\prime}} \in \mathcal{F}.$ Since $P_{\tilde{N}^{\prime}} \leq P_{\tilde{N}^{\prime} \tilde{\oplus} \ker \Pi} , $  we deduce that $P_{\tilde{N}^{\prime}} \in \mathcal{F}.$\\
	By the same arguments as in the proof of \cite[Lemma 3.1.13]{Ths} we obtain that $\Pi F_{\mid_{Im \Pi}} $ has the matrix
	$\left\lbrack
	\begin{array}{ll}
		(\Pi F)_{1} & 0 \\
		0 & (\Pi F)_{4} \\
	\end{array}
	\right \rbrack
	$
	with respect to the decomposition 
	$$Im \Pi = \tilde{\mathcal{U}}(\Pi(M)) \tilde{\oplus} \tilde{\mathcal{U}} (\tilde{N}) \stackrel{\Pi F}{\longrightarrow} 
	 M^{\prime} \tilde \oplus \tilde N^{\prime}= Im \Pi$$
	where $\tilde{\mathcal{U}}$ and $(\Pi F)_{1} $ are isomorphisms. Since $P_{\tilde{N}} \in \mathcal{F} $ and $\tilde{\mathcal{U}}$ is an isomorphism, by \cite[Lemma 2.1]{IS10} we get that $P_{\tilde{\mathcal{U}} (\tilde{N})} \sim P_{\tilde{N}} \in \mathcal{F}. $ This proves the implication in one direction because $P_{\tilde{N}^{\prime}} \in \mathcal{F}, $ also.
 	
 	Let us show now the implication in the opposite direction. If
 	$$Im \Pi = M \tilde{\oplus} N \stackrel{\Pi F}{\longrightarrow}  M^{\prime} \tilde{\oplus} N^{\prime} = Im \Pi $$
 	is a decomposition satisfying the conditions in the lemma, then by the same arguments as in the proof of \cite[Lemma 3.1.13]{Ths} we obtain that $F$ has the matrix 
	 $\left\lbrack
	 \begin{array}{ll}
	 	F_{1} & F_{2} \\
	 	F_{3} & F_{4} \\
	 \end{array}
	 \right \rbrack
	 $
	 with respect to the decomposition
	$$\mathcal{A} = M \tilde \oplus (N \tilde{\oplus} \ker \Pi) \stackrel{F}{\longrightarrow}  M^{\prime} \tilde \oplus (N^{\prime} \tilde{\oplus} \ker \Pi) = \mathcal{A} ,$$ 
	where $F_{1}$ is an isomorphism. Hence, by the method from the proof of \cite[Lemma 2.7.10]{MT}, we get that $F$ has the matrix  
	$\left\lbrack
	\begin{array}{ll}
		\tilde{F_{1}} & 0 \\
		0 & \tilde{F_{4}} \\
	\end{array}
	\right \rbrack
	$
	with respect to the decomposition 
	$$\mathcal{A} = M \tilde{\oplus} \mathcal{U}(N \tilde{\oplus} \ker \Pi )  \stackrel{F}{\longrightarrow} V(M^{\prime})      \tilde{\oplus} (N^{\prime} \tilde{\oplus} \ker \Pi) = \mathcal{A} $$
	where $\tilde{F_{1}},\mathcal{U}$ and V are isomorphisms. Since $\Pi \in  
	\mathcal{M} \mathcal{K} \Phi (\mathcal{A}),$ by Lemma \ref{closed} we must have that $\ker \Pi$ is orthogonally complementable and $P_{\ker \Pi} \in \mathcal{F}.$ Now, since by the assumption we have that $P_{N}, P_{N^{\prime}} \in \mathcal{F},$ by \cite[Lemma 2.4]{IS10} we get that  $P_{N \tilde{\oplus} \ker \Pi}, P_{N^{\prime} \tilde{\oplus} \ker \Pi} \in \mathcal{F}.$ Finally, since $\mathcal{U}$ is an isomorphism, by \cite[Lemma 2.1]{IS10} we deduce that  $P_{\mathcal{U} (N \tilde{\oplus} \ker \Pi)} \in \mathcal{F}.$ This proves the implication in the opposite direction.\\
	Finally, by using \cite[Proposition 2.10]{IS10} instead of \cite[Lemma 2.7.11]{MT} and by applying \cite[Lemma 2.4]{IS10} we can proceed in exactly the same way as in the last part of the proof of \cite[Lemma 3.1.13]{Ths} to deduce the last statement in the proposition regarding index.
\end{proof}
Motivated by \cite[Definition 5.6]{IS1} we introduce now the following definition.

\begin{definition}
	Let $F \in B(\mathcal{A}).$ We say that $F \in \mathcal{M} \mathcal{K} {\Phi_{+}^{-}}^{\prime} (\mathcal{A})$ if there exists an $\mathcal{M} \mathcal{K} \Phi_{+} $-decomposition 
	$$ \mathcal{A} = M_{1} \tilde \oplus N_{1} \stackrel{F}{\longrightarrow} M_{2} \tilde \oplus N_{2}= \mathcal{A}  $$ 
	for $F$ with the property that $N_{1} \preceq N_{2} ,$ that is $ N_{1} $ is isomorphic to a closed submodule of $ N_{2} .$  \\
	We say that $F \in \mathcal{M} \mathcal{K} {\Phi_{-}^{+}}^{\prime} (\mathcal{A})$ if there exists an $\mathcal{M} \mathcal{K} \Phi_{-} $-decomposition 
	$$ \mathcal{A} = M_{1} \tilde \oplus N_{1} \stackrel{F}{\longrightarrow} M_{2} \tilde \oplus N_{2}= \mathcal{A}  $$ 
	for $F$ with the property that $N_{2} \preceq N_{1} .$\\
	Finally, we say that $F \in \mathcal{M} \mathcal{K} \Phi_{0} (\mathcal{A})$ if there exists an $\mathcal{M} \mathcal{K} \Phi $-decomposition 
	$$ \mathcal{A} = M_{1} \tilde \oplus N_{1} \stackrel{F}{\longrightarrow} M_{2} \tilde \oplus N_{2}= \mathcal{A}  $$ 
	for $F$ with the property that $N_{1} \cong N_{2} .$
\end{definition}
\begin{lemma} \label{R12 L02} 
	Let $M$ and $N$ be two closed, complementable submodules of $\mathcal{A}.$ Then $M$ is isomorphic to a closed submodule of $N$ if and only if $ P_{M} \preceq P_{N}.$	
\end{lemma}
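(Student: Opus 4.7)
The plan is to translate the submodule statement into a projection statement using the identification $M = P_M \mathcal{A}$, $N = P_N \mathcal{A}$ (well-defined since complementability together with Remark \ref{r10 r01} gives orthogonal complementability with an orthogonal projection in $\mathcal{A}$), and then to interpret Murray--von Neumann equivalence via the associated partial isometry in $\mathcal{A}$.

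For the backward direction, suppose $P_M \preceq P_N$. By Definition \ref{r10d 1.1} there exist a projection $P' \in \mathcal{A}$ with $P' \leq P_N$ and an element $v \in \mathcal{A}$ satisfying $vv^* = P_M$ and $v^*v = P'$. Left multiplication by $v$ maps $P'\mathcal{A}$ into $P_M \mathcal{A} = M$, with two-sided inverse given by left multiplication by $v^*$. Hence $M$ is isomorphic to $P'\mathcal{A}$ as a Hilbert module, and since $P' \leq P_N$ forces $P'\mathcal{A} \subseteq P_N \mathcal{A} = N$ with $P'\mathcal{A}$ closed (as the image of a projection), this exhibits $M$ as isomorphic to a closed submodule of $N$.

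For the forward direction, suppose $U \colon M \to N'$ is a module isomorphism with $N' \subseteq N$ closed. I would extend $U$ to an operator $T \in B(\mathcal{A}) = \mathcal{A}$ by declaring $T = U \circ P_M$, composed with the inclusion $N' \hookrightarrow \mathcal{A}$. Then $\operatorname{Im} T = N'$ is closed, so by \cite[Theorem 2.3.3]{MT} the submodule $N'$ is orthogonally complementable in $\mathcal{A}$, which yields a well-defined projection $P_{N'} \in \mathcal{A}$ satisfying $P_{N'} \leq P_N$ because $N' \subseteq N$. Taking the polar decomposition $T = V|T|$ inside $B(\mathcal{A}) = \mathcal{A}$, whose partial isometry lives in the algebra precisely because $T$ has closed range, produces $V \in \mathcal{A}$ with $V^*V = P_M$ and $VV^* = P_{N'}$. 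Therefore $P_M \sim P_{N'} \leq P_N$, which gives $P_M \preceq P_N$.

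The main obstacle is the forward direction: one must ensure that $N'$ inherits orthogonal complementability in the ambient module $\mathcal{A}$ (not merely in $N$) so that $P_{N'}$ exists as an element of $\mathcal{A}$, and one must extract an honest partial isometry in $\mathcal{A}$ from the abstract isomorphism $U$. Packaging $U$ as the closed-range adjointable operator $T$ on $\mathcal{A}$ produces both simultaneously, after which \cite[Theorem 2.3.3]{MT} and polar decomposition finish the argument; alternatively, one can invoke \cite[Lemma 2.1]{IS10} in place of the polar decomposition step.
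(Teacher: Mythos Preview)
Your proof is correct and follows essentially the same route as the paper. For the forward direction both you and the paper form the adjointable operator $T = J\iota P_M$ (your $U \circ P_M$ composed with inclusion), invoke \cite[Theorem 2.3.3]{MT} on its closed range to obtain orthogonal complementability of $N'$, and then conclude $P_M \sim P_{N'} \leq P_N$; the paper cites \cite[Lemma 2.1]{IS10} for this last step, which you list as the alternative to your polar-decomposition argument. For the backward direction the paper simply appeals to \cite[Lemma 2.1]{IS10}, whereas you spell out the partial-isometry computation explicitly; the content is the same.
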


\begin{proof}
	Note first that $M$ is orthogonally complementable by Remark \ref{r10 r01}. Let $ N^{\prime} \subseteq N $ such that $M \cong N^{\prime}$ and denote be $\iota$ the isomorphism from M onto $N^{\prime}.$ If $J$ stands for the inclusion from $N^{\prime}$ into $\mathcal{A},$ then $J \iota P_{M}$ is a bounded, $\mathcal{A}-$linear map on $\mathcal{A},$ hence it is adjointable by \cite[Proposition 2.5.2]{MT}. Since $Im J \iota P_{M} = N^{\prime} ,$ which is closed, by \cite[Theorem 2.3.3]{MT} we obtain that $N^{\prime}$ is orthogonally complementable in $\mathcal{A}.$ By \cite[Lemma 2.1]{IS10} we have that $P_{M} \sim P_{N^{\prime}} ,$ and, obviously, $ P_{N^{\prime}} \leq  P_{M^{\prime}} .$The opposite implication also follows from \cite[Lemma 2.1]{IS10}.
\end{proof}
By Lemma \ref{R12 L02} and \cite[Lemma 2.7]{IS10} it follows that $$\mathcal{M} \mathcal{K} {\Phi_{+}^{-}}^{\prime} (\mathcal{A}) = \mathcal{K}\Phi_{+}^{-} (\mathcal{A}),\text{ } \mathcal{M} \mathcal{K} {\Phi_{-}^{+}}^{\prime} (\mathcal{A}) = \mathcal{K}\Phi_{-}^{+} (\mathcal{A}),\text{ } \mathcal{M} \mathcal{K} \Phi_{0} (\mathcal{A}) = \mathcal{K} \Phi_{0} (\mathcal{A}) .$$
Below we obtain some properties of $ \mathcal{M} \mathcal{K} {\Phi_{+}^{-}}^{\prime} $ and $ \mathcal{M} \mathcal{K} {\Phi_{-}^{+}}^{\prime} -$operators. 
\begin{proposition} \label{R12 P01} 
	Let $F \in B(\mathcal{A}).$ Then $F \in \mathcal{M} \mathcal{K} {\Phi_{+}^{-}}^{\prime} (\mathcal{A})$ if and only if there exist some $D \in B(\mathcal{A})$ and $K \in \mathcal{F}$ such that $F=D+K$ and $D$ is bounded below. 
\end{proposition}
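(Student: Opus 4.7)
The plan is to prove both implications directly using the Hilbert module machinery of this section, in the spirit of Lemmas \ref{closed} and \ref{boundedbelow}.

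\textbf{Forward direction.} Starting from an $\mathcal{M} \mathcal{K} {\Phi_{+}^{-}}^{\prime}$-decomposition $\mathcal{A} = M_{1} \tilde{\oplus} N_{1} \stackrel{F}{\longrightarrow} M_{2} \tilde{\oplus} N_{2} = \mathcal{A}$ of $F$, with $F$ block diagonal, $F_{1}$ an isomorphism, $P_{N_{1}} \in \mathcal{F}$, and an isomorphism $\iota : N_{1} \to N_{1}^{\prime} \subseteq N_{2}$ witnessing $N_{1} \preceq N_{2}$, I define $D$ to be the block diagonal operator with blocks $F_{1}$ and $\iota$. Its image lies in $M_{2} \tilde{\oplus} N_{1}^{\prime}$, which is a closed algebraic direct sum inside $M_{2} \tilde{\oplus} N_{2} = \mathcal{A}$ (inheriting bounded skew projections from the latter), and since both diagonal blocks are isomorphisms, $D$ is bounded below. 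The remainder $K = F - D$ satisfies $K = K \pi_{N_{1}}$, where $\pi_{N_{1}}$ is the skew projection onto $N_{1}$ along $M_{1}$; since $\pi_{N_{1}}$ takes values in $N_{1}$, one has $\pi_{N_{1}} = P_{N_{1}} \pi_{N_{1}} \in \mathcal{F}$ by the ideal property, so $K \in \mathcal{F}$.

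\textbf{Converse direction.} Given $F = D + K$ with $\|Dx\| \geq c \|x\|$ for some $c > 0$ and $K \in \mathcal{F}$, I choose $\alpha$ with $\|K(I - P_{\alpha})\| < c/2$, so that $F$ is bounded below on $Im (I - P_{\alpha})$ by $c/2$. Lemma \ref{boundedbelow} applied with $M = Im \, P_{\alpha}$ produces the $\mathcal{M}\mathcal{K}\Phi_{+}$-decomposition constructed in its proof, with $N_{1} = \mathcal{U}(M) \cong M$ and $N_{2} = F(M^{\perp})^{\perp}$. The remaining step, which is the main obstacle, is to upgrade this to an $\mathcal{M} \mathcal{K} {\Phi_{+}^{-}}^{\prime}$-decomposition by verifying $N_{1} \preceq N_{2}$, or equivalently exhibiting an $\mathcal{A}$-linear embedding of $M$ into $F(M^{\perp})^{\perp}$.

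For this embedding I use the compression $\Pi D_{\mid_{M}}$, where $\Pi = P_{F(M^{\perp})^{\perp}}$. Given $m \in M$, write $P_{F(M^{\perp})} D(m) = F(m^{\prime})$ for the unique $m^{\prime} \in M^{\perp}$ (well-defined because $F_{\mid_{M^{\perp}}}$ is a bounded-below isomorphism onto its closed image), so that $\Pi D(m) = D(m - m^{\prime}) - K(m^{\prime})$. The orthogonality relations $P_{\alpha}(m - m^{\prime}) = m$ and $(I - P_{\alpha})(m - m^{\prime}) = -m^{\prime}$ give $\|m\|, \|m^{\prime}\| \leq \|m - m^{\prime}\|$, and combining with $\|D(m - m^{\prime})\| \geq c \|m - m^{\prime}\|$ and $\|K(m^{\prime})\| \leq (c/2)\|m^{\prime}\|$ yields $\|\Pi D(m)\| \geq (c/2)\|m\|$. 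Thus $\Pi D_{\mid_{M}}$ is adjointable, bounded below, and $\mathcal{A}$-linear, so its image $\Pi D(M)$ is a closed, orthogonally complementable submodule of $F(M^{\perp})^{\perp}$ isomorphic to $M$; by Lemma \ref{R12 L02} this yields $M \preceq N_{2}$, and via the isomorphism $\mathcal{U}$ we obtain $N_{1} \preceq N_{2}$ as required.
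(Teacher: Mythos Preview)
Your argument is correct in both directions, but the converse direction follows a genuinely different route from the paper's.

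For the forward direction, your explicit construction of $D$ as the block-diagonal operator $\operatorname{diag}(F_{1},\iota)$ and $K=(F_{4}-\iota)\pi_{N_{1}}$ is essentially what the paper means when it says ``combine the proof of Lemma~\ref{R12 L02} with the proof of \cite[Theorem 5.10]{IS1}''; you have simply written it out rather than citing it. For the converse, however, the paper does not build an $\mathcal{M}\mathcal{K}\Phi_{+}^{-\prime}$-decomposition of $F$ directly. Instead it first proves, using the machinery of \cite[Lemma 2.11]{IS10}, that the class $\mathcal{M}\mathcal{K}\Phi_{+}^{-\prime}(\mathcal{A})$ is invariant under perturbation by elements of $\mathcal{F}$, then observes that a bounded-below $D$ lies in this class trivially, so $F=D+K$ does too. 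Your approach bypasses the perturbation-invariance lemma entirely: you choose $\alpha$ so that $F$ is bounded below on $\operatorname{Im}(I-P_{\alpha})$, take the resulting $\mathcal{M}\mathcal{K}\Phi_{+}$-decomposition from Lemma~\ref{boundedbelow}, and then verify $N_{1}\preceq N_{2}$ by exhibiting the explicit embedding $\Pi D_{\mid M}$ and the chain of inequalities $\|\Pi D(m)\|\geq \|D(m-m')\|-\|K m'\|\geq (c/2)\|m-m'\|\geq (c/2)\|m\|$. This is more elementary and self-contained, but it does not yield the perturbation-invariance of $\mathcal{M}\mathcal{K}\Phi_{+}^{-\prime}(\mathcal{A})$ as a by-product, which the paper's argument does (and which is of independent interest). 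One minor remark: the appeal to Lemma~\ref{R12 L02} at the end is not really needed, since you already have $N_{1}\cong M\cong \Pi D(M)\subseteq N_{2}$ with $\Pi D(M)$ closed, which is exactly the relation $N_{1}\preceq N_{2}$ in Definition~3.7.
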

\begin{proof}
	We prove first that the set $\mathcal{M} \mathcal{K} {\Phi_{+}^{-}}^{\prime} (\mathcal{A}) $ is invariant under perturbations by finite type elements. Although it has in fact been proved in \cite{BJMA}, we shall provide here another argumentation. Let $F \in \mathcal{M} \mathcal{K} {\Phi_{+}^{-}}^{\prime} (\mathcal{A})$ and suppose that $$ \mathcal{A} = M_{1} \tilde{\oplus} N_{1}  \stackrel{F}{\longrightarrow} M_{2} \tilde{\oplus} N_{2} = \mathcal{A} $$ is an $ \mathcal{M} \mathcal{K} {\Phi_{+}^{-}}^{\prime}- $ decomposition for $F.$ In particular, it is an $ \mathcal{M} \mathcal{K} \Phi_{+}- $ decomposition for $F ,$ hence  $ P_{N_{1} } \in \mathcal{F} .$ Therefore, if $ K \in \mathcal{F}  ,$ then by exactly the same arguments as in the proof \cite[Lemma 2.11]{IS10} we can find a sufficiently large $ \alpha$ and a closed, complementable submodule $ \mathcal{R} \subseteq \mathcal{A} $ such that 	$$ \mathcal{A} = Im (I-P_{\alpha_{1}}) \tilde{\oplus} ( \mathcal{U}(\mathcal{R}) \tilde{\oplus} \mathcal{U}(N_{1}) )
	\stackrel{F+K}{\longrightarrow} V( Im (I-P_{\alpha_{1}}) ) \tilde{\oplus} ( F(\mathcal{R})  \tilde{\oplus} N_{2} ) = \mathcal{A} $$ is an  $ \mathcal{M} \mathcal{K} \Phi_{+}- $ decomposition for the operator $ F+K ,$ where $ \mathcal{R} \cong F(\mathcal{R}) $ and $ \mathcal{U}, V $ are isomorphisms. Now, since $\mathcal{U}(\mathcal{R}) \cong \mathcal{R} \cong F(\mathcal{R}) $ and $ \mathcal{U}(N_{1}) \cong N_{1} \preceq N_{2} ,$ it is not hard to see that $ \mathcal{U}(\mathcal{R}) \tilde{\oplus} \mathcal{U}(N_{1}) $ is isomorphic to a closed submodule of $ F(\mathcal{R})  \tilde{\oplus} N_{2} .$ Hence, we have obtained an $ \mathcal{M} \mathcal{K} {\Phi_{+}^{-}}^{\prime}- $decomposition for $ F+K ,$ which shows that the set $\mathcal{M} \mathcal{K} {\Phi_{+}^{-}}^{\prime} (\mathcal{A}) $ is invariant under perturbations by finite type elements.\\
	Next, if $D$ is bounded below, then $ImD$ is closed, hence by \cite[Theorem 2.3.3]{MT} we have that $ImD$ is orthogonally complementable in $\mathcal{A} .$ It follows that $D \in \mathcal{M} \mathcal{K} {\Phi_{+}^{-}}^{\prime} (\mathcal{A}) ,$ hence, by previous arguments we get that $D+K \in \mathcal{M} \mathcal{K} {\Phi_{+}^{-}}^{\prime} (\mathcal{A}) ,$ which proves the implication in one direction.\\
	
	By combining the proof of Lemma \ref{R12 L02} with the proof of \cite[Theorem 5.10]{IS1} we can prove the implication in the opposite direction.
	\end{proof} 
It follows from Lemma \ref{related} that Proposition \ref{R12 P01} is equivalent to Proposition \ref{r10p 1.7} part 1), however, we have provided here a proof which is different from the proof given in \cite{BJMA}.\\
 Now we will provide another proof of Proposition \ref{r10p 1.4}, which is different from the original proof of this proposition  given in \cite{BJMA}. The new proof builds further on the proof of \cite[Theorem 4.2]{FIL}.

\begin{proposition}  \label{R12 P02} 
	The sets
	$$ \mathcal{M} \mathcal{K} \Phi_{+} (\mathcal{A}) \setminus \mathcal{M} \mathcal{K} {\Phi_{+}^{-}}^{\prime} (\mathcal{A}), \text{ } 
	\mathcal{M} \mathcal{K} \Phi_{-} (\mathcal{A}) \setminus \mathcal{M} \mathcal{K} {\Phi_{-}^{-}}^{\prime} (\mathcal{A}) $$
	and $\mathcal{M} \mathcal{K} \Phi (\mathcal{A})  \setminus \mathcal{M} \mathcal{K} \Phi_{0} (\mathcal{A})  $ are open.
\end{proposition}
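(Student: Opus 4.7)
The plan is to show openness of each of the three sets by a local invariance argument for the auxiliary modules $N_1$ and $N_2$ in an $\mathcal{M}\mathcal{K}\Phi_\pm$-decomposition, adapting the scheme from the proof of \cite[Theorem 4.2]{FIL}. I will treat $\mathcal{M}\mathcal{K}\Phi_+(\mathcal{A}) \setminus \mathcal{M}\mathcal{K}\Phi_+^{-\prime}(\mathcal{A})$ in detail; the set $\mathcal{M}\mathcal{K}\Phi_-(\mathcal{A}) \setminus \mathcal{M}\mathcal{K}\Phi_-^{+\prime}(\mathcal{A})$ is handled dually (the roles of $N_1, N_2$ are swapped and one preserves $N_1$ instead of $N_2$ under perturbation), and for $\mathcal{M}\mathcal{K}\Phi(\mathcal{A}) \setminus \mathcal{M}\mathcal{K}\Phi_0(\mathcal{A})$ the argument becomes cleaner because both $P_{N_1}, P_{N_2}$ then lie in $\mathcal{F}$.

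Fix $F \in \mathcal{M}\mathcal{K}\Phi_+(\mathcal{A}) \setminus \mathcal{M}\mathcal{K}\Phi_+^{-\prime}(\mathcal{A})$ and an $\mathcal{M}\mathcal{K}\Phi_+$-decomposition $\mathcal{A} = M_1 \tilde\oplus N_1 \stackrel{F}{\longrightarrow} M_2 \tilde\oplus N_2 = \mathcal{A}$. With respect to this decomposition $F + T$ has the block matrix $\bigl(\begin{smallmatrix}F_1 + T_{11} & T_{12}\\ T_{21} & F_4 + T_{22}\end{smallmatrix}\bigr)$, and for $\|T\|$ smaller than some $\epsilon_0 > 0$ the block $F_1 + T_{11}$ remains an isomorphism. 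Applying the triangular row/column reductions used in the proof of \cite[Lemma 2.7.10]{MT} (already invoked in the proofs of Lemma~\ref{boundedbelow} and Lemma~\ref{R12 L01}), and arranging the transformations so that the $N_2$-summand of the codomain is left untouched, yields an $\mathcal{M}\mathcal{K}\Phi_+$-decomposition
$$\mathcal{A} = M_1 \tilde\oplus \widehat N_1(T) \stackrel{F+T}{\longrightarrow} \widehat M_2(T) \tilde\oplus N_2 = \mathcal{A},$$
in which the second codomain summand is \emph{exactly} $N_2$ and $\widehat N_1(T)$ is the graph of $-(F_1 + T_{11})^{-1} T_{12}$ over $N_1$, so $\widehat N_1(T) \cong N_1$ via a near-identity module isomorphism and $P_{\widehat N_1(T)} \sim P_{N_1}$ by \cite[Lemma 2.1]{IS10}.

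Suppose now, for contradiction, that some $G = F + T$ with $\|T\| < \epsilon_0$ lies in $\mathcal{M}\mathcal{K}\Phi_+^{-\prime}(\mathcal{A})$, witnessed by a second decomposition $\mathcal{A} = M_1' \tilde\oplus N_1' \stackrel{G}{\longrightarrow} M_2' \tilde\oplus N_2' = \mathcal{A}$ with $N_1' \preceq N_2'$. I compare the two decompositions of $G$ by the stabilization argument from the proof of Lemma~\ref{R12 L01} (originating in the proof of \cite[Theorem 2.9]{IS10}): choose $\alpha$ large and closed complementable submodules $\mathcal{R}, \mathcal{R}'$ with $P_\mathcal{R}, P_{\mathcal{R}'} \in \mathcal{F}$ so that both decompositions enlarge to $\mathcal{M}\mathcal{K}\Phi_+$-decompositions sharing the common large domain piece $Im(I - P_\alpha)$ and the common large codomain piece $G(Im(I - P_\alpha))$, with $\mathcal{R} \cong G(\mathcal{R})$ and $\mathcal{R}' \cong G(\mathcal{R}')$. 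Because the small pieces are complements of the same submodule in $\mathcal{A}$, one extracts isomorphisms $\mathcal{R} \tilde\oplus \widehat N_1(T) \cong \mathcal{R}' \tilde\oplus N_1'$ and $G(\mathcal{R}) \tilde\oplus N_2 \cong G(\mathcal{R}') \tilde\oplus N_2'$. Translating via Lemma~\ref{R12 L02}, the hypothesis $N_1' \preceq N_2'$ now yields the projection inequality $P_\mathcal{R} + P_{\widehat N_1(T)} \preceq P_\mathcal{R} + P_{N_2}$ inside $\text{Proj}(\mathcal{F})$; cancelling the common $P_\mathcal{R}$-summand produces $P_{\widehat N_1(T)} \preceq P_{N_2}$, hence $N_1 \preceq N_2$, contradicting $F \notin \mathcal{M}\mathcal{K}\Phi_+^{-\prime}(\mathcal{A})$.

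The main obstacle is the final cancellation step: the semigroup $S(\mathcal{F})$ is not cancellative in general, so $p + a \preceq p + b$ need not imply $a \preceq b$. The point of building on \cite[Theorem 4.2]{FIL} is precisely that in the stabilization above it is the \emph{same} submodule $\mathcal{R}$ (and hence the same projection $P_\mathcal{R}$) that appears on both sides; by a careful use of Definition~\ref{d 09}(iii), the partial isometries implementing the Murray--von Neumann equivalences can be chosen to restrict to the identity on the common $\mathcal{R}$-block, so the cancellation becomes a literal subtraction of orthogonal projections inside $\mathcal{F}$. In the Weyl case the obstacle disappears entirely: both $P_{N_1}, P_{N_2} \in \mathcal{F}$, the cancellation takes place in the Grothendieck group $K(\mathcal{F})$, and the argument reduces to the local constancy of the index, which follows from \cite[Theorem 2.12]{IS10}.
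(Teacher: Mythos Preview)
Your overall architecture matches the paper's: perturb, compare two $\mathcal{M}\mathcal{K}\Phi_+$-decompositions of the perturbed operator via the stabilization from \cite[Theorem 2.9]{IS10}, and transport the $\preceq$ relation back to $F$ following \cite[Theorem 4.2]{FIL}. The gap is precisely the step you flag yourself and then try to talk your way around: the cancellation $P_{\mathcal{R}}+P_{\widehat N_1(T)}\preceq P_{\mathcal{R}}+P_{N_2}\ \Rightarrow\ P_{\widehat N_1(T)}\preceq P_{N_2}$. The chain you build passes through $\mathcal{R}'\tilde\oplus N_1'\hookrightarrow \mathcal{R}'\tilde\oplus N_2'$ and then through the isomorphisms $\mathcal{R}\tilde\oplus\widehat N_1(T)\cong\mathcal{R}'\tilde\oplus N_1'$ and $G(\mathcal{R})\tilde\oplus N_2\cong G(\mathcal{R}')\tilde\oplus N_2'$; none of these isomorphisms need respect the direct-sum splitting, so there is no reason the composite embedding $\mathcal{R}\tilde\oplus\widehat N_1(T)\hookrightarrow \mathcal{R}\tilde\oplus N_2$ restricts to the identity on the $\mathcal{R}$-block. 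Invoking Definition~\ref{d 09}(iii) does not help here: that axiom lets you move a projection off another one, but it does not force a given partial isometry to fix a prescribed subprojection. Without genuine cancellation in $S(\mathcal{F})$, this step fails.

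The paper's proof sidesteps the issue entirely: it never cancels. Instead, it observes that since $P_{\mathcal{R}'}\in\mathcal{F}$ and $P_{N_1}\in\mathcal{F}$, the \emph{enlarged} small domain piece $\mathcal{U}(\mathcal{R}')\tilde\oplus N_1$ still has its projection in $\mathcal{F}$ by \cite[Lemma 2.4]{IS10}, and therefore the enlarged decomposition of $F$ with small pieces $\mathcal{U}(\mathcal{R}')\tilde\oplus N_1$ and $F(\mathcal{U}(\mathcal{R}'))\tilde\oplus N_2$ is already an $\mathcal{M}\mathcal{K}{\Phi_+^{-}}'$-decomposition, giving the contradiction directly. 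Your Weyl-case shortcut has the same defect: local constancy of the index only yields $[P_{N_1}]=[P_{N_2}]$ in $K(\mathcal{F})$, whereas membership in $\mathcal{M}\mathcal{K}\Phi_0(\mathcal{A})$ requires the strictly stronger $P_{N_1}\sim P_{N_2}$ (cf.\ the Remark following Definition~\ref{r10d 1.3}); so that case too must be handled by the enlarged-decomposition argument rather than by cancellation in $K(\mathcal{F})$.
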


\begin{proof}
	Let $ F \in \mathcal{M} \mathcal{K} \Phi_{+} (\mathcal{A}) \setminus \mathcal{M} \mathcal{K} {\Phi_{+}^{-}}^{\prime} (\mathcal{A}).$ As in the proof of \cite[Theorem 4.2]{FIL}, there exists some $ \epsilon \gneq 0 ,$ such that if $D \in B(\mathcal{A}) $ and  $\parallel F-D \parallel < \epsilon ,$ then we can find $\mathcal{M} \mathcal{K} \Phi_{+} - $decompositions 
	$$\mathcal{A} = M_{1} \tilde \oplus N_{1} \stackrel{F}{\longrightarrow}  M_{2} \tilde \oplus N_{2}= \mathcal{A} ,$$
	$$\mathcal{A} = M_{1}^{\prime} \tilde \oplus N_{1}^{\prime} \stackrel{D}{\longrightarrow}  M_{2}^{\prime} \tilde \oplus N_{2}^{\prime}= \mathcal{A} ,$$ 	
	for $F$ and $D,$ respectively, where $M_{1} \cong M_{1}^{\prime},$ $N_{1} \cong N_{1}^{\prime},$  $M_{2} \cong M_{2}^{\prime}$ and $N_{2} \cong N_{2}^{\prime}.$ 
	Assume that $D \in \mathcal{M}\mathcal{K} {\Phi_{+}^{-}}^{\prime} (\mathcal{A})$  and let
	$$\mathcal{A} = M_{1}^{\prime \prime} \tilde \oplus N_{1}^{\prime \prime} \stackrel{D}{\longrightarrow}  M_{2}^{\prime \prime} \tilde \oplus N_{2}^{\prime \prime}= \mathcal{A} ,$$  
	an $\mathcal{M} \mathcal{K} {\Phi_{+}^{-}}^{\prime} -$decomposition for $D.$ By exactly the same arguments as in the first part of the proof of \cite[Theorem 2.9]{IS10} we can find some $\alpha$ such that 
	$$\mathcal{A} = Im (I-P_{\alpha})  \tilde{\oplus} (\mathcal{R}^{\prime} \tilde{\oplus} N_{1}^{\prime} ) \stackrel{D}{\longrightarrow} D(Im (I-P_{\alpha})) \tilde{\oplus} (D(\mathcal{R}^{\prime})  \tilde{\oplus} N_{2}^{\prime} ) =  \mathcal{A}    ,$$ 
	$$\mathcal{A} = Im (I-P_{\alpha})  \tilde{\oplus} (\mathcal{R}^{\prime \prime} \tilde{\oplus} N_{1}^{\prime \prime} ) \stackrel{D}{\longrightarrow} D(Im (I-P_{\alpha})) \tilde{\oplus} (D(\mathcal{R}^{\prime \prime})  \tilde{\oplus} N_{2}^{\prime \prime} ) =  \mathcal{A}    ,$$ 
	are two $\mathcal{M} \mathcal{K} \Phi_{+} - $decompositions for $D$. By the construction, $\mathcal{R}^{\prime} \cong D(\mathcal{R}^{\prime}),$ $\mathcal{R}^{\prime \prime} \cong D(\mathcal{R}^{\prime \prime}),$ $\mathcal{R}^{\prime},$ $\mathcal{R}^{\prime \prime}$ are orthogonally complementable and $P_{\mathcal{R}^{\prime}},P_{\mathcal{R}^{\prime \prime}} \in \mathcal{F}$ . Then we can proceed as in the proof of \cite[Theorem 4.2]{FIL} to deduce that there exists an isomorphism $\mathcal{U}$ on $\mathcal{A}$ such that $F$ has the matrix 
	$\left\lbrack
	\begin{array}{ll}
		F_{1} & 0 \\
		0 & F_{4} \\
	\end{array}
	\right \rbrack
	$ with respect to the decomposition
	
	$$\mathcal{A} = \mathcal{U}(I-P_{\alpha})  \tilde{\oplus} (\mathcal{U}(\mathcal{R}^{\prime}) \tilde{\oplus} N_{1} ) \stackrel{F}{\longrightarrow} F(\mathcal{U} (I-P_{\alpha})) \tilde{\oplus} (F(\mathcal{U}(\mathcal{R}^{\prime}))  \tilde{\oplus} N_{2}) =  \mathcal{A}    $$
	where $F_{1}$ is an isomorphism and $(F(\mathcal{U}(\mathcal{R}^{\prime})) \cong \mathcal{R}^{\prime} .$ In addition, by the construction from the proof of \cite[Theorem 4.2]{FIL} , we have that $\mathcal{U}(\mathcal{R}^{\prime}) \tilde{\oplus} N_{1} \preceq F(\mathcal{U}(\mathcal{R}^{\prime}))  \tilde{\oplus} N_{2} .$  Since $P_{\mathcal{R}^{\prime}}  \in \mathcal{F} $ and $ \mathcal{U} $ is an isomorphism, by \cite[Lemma 2.1]{IS10} we have that $ P_{\mathcal{U}(\mathcal{R}^{\prime})} \in \mathcal{F} .$ Hence, since $ P_{N_{1}} \in \mathcal{F} ,$ by \cite[Lemma 2.4]{IS10} we get that $P_{\mathcal{U}(\mathcal{R}^{\prime} )   \tilde{\oplus} N_{1}} \in \mathcal{F}.$ Thus, we obtain that 
	$F \in  \mathcal{M} \mathcal{K} {\Phi_{+}^{-}}^{\prime} (\mathcal{A}),$ which is a contradiction. Therefore, we must have that $ D \in \mathcal{M} \mathcal{K} \Phi_{+} (\mathcal{A}) \setminus \mathcal{M} \mathcal{K} {\Phi_{+}^{-}}^{\prime} (\mathcal{A}),$ which shows that the set $ \mathcal{M} \mathcal{K} \Phi_{+} (\mathcal{A}) \setminus \mathcal{M} \mathcal{K} {\Phi_{+}^{-}}^{\prime} (\mathcal{A}) $ is open since the argumentation above holds for every  $D \in B(\mathcal{A}) $ with  $\parallel F-D \parallel < \epsilon .$\\
	The proofs of the other statements are similar. 
\end{proof} 	
Next, we will also provide another proof of Lemma \ref{semi-Weyl-intersect}, which builds further on the proof of \cite[Proposition 4.4]{FIL}.
\begin{lemma}  \label{R12 L03} 
	Let $F \in \mathcal{M} \mathcal{K} \Phi (\mathcal{A} ) \cap \mathcal{M} \mathcal{K}\Phi_{+}^{-'}(\mathcal{A}) \cap \mathcal{M} \mathcal{K} \Phi_{-}^{+'}(\mathcal{A}).$ Then there exists an $\mathcal{M} \mathcal{K} \Phi $-decomposition 
	$$ \mathcal{A} = M_{1} \tilde \oplus N_{1} \stackrel{F}{\longrightarrow} M_{2} \tilde \oplus N_{2}= \mathcal{A}  $$ 
	for $F$ with the property that $N_{1} \preceq N_{2} $ and $N_{2} \preceq N_{1}.$
\end{lemma}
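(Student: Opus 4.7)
The plan is to convert the hypotheses into two $\mathcal{M} \mathcal{K} \Phi$-decompositions of $F$ that respectively carry the inequalities $N_{1} \preceq N_{2}$ and $N_{2} \preceq N_{1}$, then coarsen both to a common refinement in which both inequalities survive simultaneously. First, I let $\mathcal{A} = M_{1} \tilde\oplus N_{1} \stackrel{F}{\longrightarrow} M_{2} \tilde\oplus N_{2} = \mathcal{A}$ be an $\mathcal{M} \mathcal{K} {\Phi_{+}^{-}}^{\prime}$-decomposition with $N_{1} \preceq N_{2}$; since $F \in \mathcal{M} \mathcal{K} \Phi (\mathcal{A})$, applying Lemma \ref{R12 L01} to this decomposition against any $\mathcal{M} \mathcal{K} \Phi$-decomposition of $F$ forces $P_{N_{2}} \in \mathcal{F}$ as well, so this is in fact already an $\mathcal{M} \mathcal{K} \Phi$-decomposition. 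The symmetric reasoning (working with the lower-semi-Weyl analogue of Lemma \ref{R12 L01}) produces an $\mathcal{M} \mathcal{K} \Phi$-decomposition $\mathcal{A} = M_{1}^{\prime} \tilde\oplus N_{1}^{\prime} \stackrel{F}{\longrightarrow} M_{2}^{\prime} \tilde\oplus N_{2}^{\prime} = \mathcal{A}$ carrying the inequality $N_{2}^{\prime} \preceq N_{1}^{\prime}$ from the hypothesis $F \in \mathcal{M} \mathcal{K} {\Phi_{-}^{+}}^{\prime} (\mathcal{A})$.

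Next, I would apply to both of these decompositions simultaneously the construction used in the first part of the proof of \cite[Theorem 2.9]{IS10} (the same device invoked in Lemma \ref{R12 L01} and Proposition \ref{R12 P02}). For a sufficiently large index $\alpha$ this yields closed, complementable submodules $\mathcal{R}, \mathcal{R}^{\prime}$ with $P_{\mathcal{R}}, P_{\mathcal{R}^{\prime}} \in \mathcal{F}$, $\mathcal{R} \cong F(\mathcal{R})$, $\mathcal{R}^{\prime} \cong F(\mathcal{R}^{\prime})$, and two $\mathcal{M} \mathcal{K} \Phi$-decompositions of $F$ sharing a common large summand:
$$\mathcal{A} = Im (I - P_{\alpha}) \tilde\oplus (\mathcal{R} \tilde\oplus N_{1}) \stackrel{F}{\longrightarrow} F(Im (I - P_{\alpha})) \tilde\oplus (F(\mathcal{R}) \tilde\oplus N_{2}) = \mathcal{A},$$
$$\mathcal{A} = Im (I - P_{\alpha}) \tilde\oplus (\mathcal{R}^{\prime} \tilde\oplus N_{1}^{\prime}) \stackrel{F}{\longrightarrow} F(Im (I - P_{\alpha})) \tilde\oplus (F(\mathcal{R}^{\prime}) \tilde\oplus N_{2}^{\prime}) = \mathcal{A}.$$

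The final step is to transport the two inequalities into a single refined decomposition. Since both refined decompositions have $Im (I - P_{\alpha})$ as a topological direct complement in $\mathcal{A}$ on the left and $F(Im (I - P_{\alpha}))$ on the right, the skew projections produce $\mathcal{A}$-linear bounded bijections (and hence Hilbert-module isomorphisms by the open mapping theorem) $\mathcal{R} \tilde\oplus N_{1} \cong \mathcal{R}^{\prime} \tilde\oplus N_{1}^{\prime}$ and $F(\mathcal{R}) \tilde\oplus N_{2} \cong F(\mathcal{R}^{\prime}) \tilde\oplus N_{2}^{\prime}$. On the one hand, $\mathcal{R} \cong F(\mathcal{R})$ combined with $N_{1} \preceq N_{2}$ gives $\mathcal{R} \tilde\oplus N_{1} \preceq F(\mathcal{R}) \tilde\oplus N_{2}$; on the other hand, $F(\mathcal{R}^{\prime}) \cong \mathcal{R}^{\prime}$ combined with $N_{2}^{\prime} \preceq N_{1}^{\prime}$ gives $F(\mathcal{R}^{\prime}) \tilde\oplus N_{2}^{\prime} \preceq \mathcal{R}^{\prime} \tilde\oplus N_{1}^{\prime}$, and transporting this through the two isomorphisms above yields $F(\mathcal{R}) \tilde\oplus N_{2} \preceq \mathcal{R} \tilde\oplus N_{1}$. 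Thus the first refined decomposition is the one sought in the lemma. The main obstacle is executing the double common-refinement cleanly — running the construction of \cite[Theorem 2.9]{IS10} with a single $\alpha$ that works for both decompositions — and carefully identifying the abstract relation $\preceq$ between Hilbert submodules with the relation between their projections via Lemma \ref{R12 L02}, so that $\preceq$ can be freely pushed through the Hilbert module isomorphisms in the concluding combinatorial step.
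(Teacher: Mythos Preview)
Your proposal is correct and follows essentially the same approach as the paper's proof: both arguments use Lemma \ref{R12 L01} to upgrade the two semi-Weyl decompositions to full $\mathcal{M}\mathcal{K}\Phi$-decompositions, then invoke the common-refinement construction from \cite[Theorem 2.9]{IS10} with a single sufficiently large $\alpha$. The paper then simply cites \cite[Proposition 4.4]{FIL} for the concluding step, whereas you have spelled out that step explicitly (the transport of the two $\preceq$ relations through the isomorphisms $\mathcal{R}\tilde\oplus N_{1}\cong \mathcal{R}'\tilde\oplus N_{1}'$ and $F(\mathcal{R})\tilde\oplus N_{2}\cong F(\mathcal{R}')\tilde\oplus N_{2}'$), which is precisely what that reference does.
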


\begin{proof}
	If
	$$\mathcal{A} = M_{1} \tilde \oplus N_{1} \stackrel{F}{\longrightarrow}  M_{2} \tilde \oplus N_{2}= \mathcal{A} $$ and $$\mathcal{A} = M_{1}^{\prime} \tilde \oplus N_{1}^{\prime} \stackrel{F}{\longrightarrow}  M_{2}^{\prime} \tilde \oplus N_{2}^{\prime}= \mathcal{A} $$ 
	are an $\mathcal{M} \mathcal{K} {\Phi_{+}^{-}}^{\prime}$ and an $\mathcal{M} \mathcal{K} {\Phi_{-}^{+}}^{\prime}$ decomposition for $F,$ then, by Lemma \ref{R12 L01}, both these decompositions are actually  $\mathcal{M} \mathcal{K} {\Phi}-$decompositions for $F$ since $ F \in \mathcal{M} \mathcal{K} \Phi (\mathcal{A} ) $ by the assumption. As in the first part of the proof of \cite[Theorem 2.9]{IS10},  we find some $\alpha$ such that
	
	$$ \mathcal{A}= Im(I-P_{\alpha}) \tilde{\oplus} ( \mathcal{R} \tilde{\oplus} N_{1} ) \stackrel{F}{\longrightarrow}   
	F (  Im(I-P_{\alpha})) \tilde{\oplus} ( F (\mathcal{R}) \tilde{\oplus} N_{1}    ) = \mathcal{A}$$
	
	$$ \mathcal{A}= Im(I-P_{\alpha}) \tilde{\oplus} ( \mathcal{R}^{\prime} \tilde{\oplus} N_{1}^{\prime}  ) \stackrel{F}{\longrightarrow}   
	F (  Im(I-P_{\alpha})) \tilde{\oplus} ( F (\mathcal{R}^{\prime} ) \tilde{\oplus} N_{2}^{\prime} ) = \mathcal{A},$$
	are two  $\mathcal{M} \mathcal{K} {\Phi}-$decompositions for $F,$ where  $P_{\mathcal{R} },P_{\mathcal{R}^{\prime }} \in \mathcal{F}$ and $\mathcal{R} \cong F(\mathcal{R}) ,$ $\mathcal{R}^{\prime} \cong F(\mathcal{R}^{\prime}) .$
	Then we can proceed as in the proof of \cite[Proposition 4.4]{FIL}.
\end{proof}
Again, by Lemma \ref{R12 L02} and \cite[Lemma 2.7]{IS10} it follows that Lemma \ref{R12 L03} is equivalent to Lemma \ref{semi-Weyl-intersect}.\\  
At the end we recall the statement of \cite[Proposition 5.1.3]{Ths} originally given in \cite{IS5}.
\begin{proposition}
	Let $ F, D \in B(\mathcal{A}) $ such that $F$ and $D$ have closed image. Suppose that $ \ker F \cong (ImF)^{ \perp} $ and $ \ker D \cong (ImD)^{ \perp} .$ If $DF$ has closed image, then $ \ker DF \cong (ImDF)^{ \perp} .$ 
\end{proposition}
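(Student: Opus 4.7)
The plan is to mirror the Hilbert-space argument of \DJ{}or\dj{}evi\'{c}, computing explicit module-theoretic decompositions of $\ker DF$ and $(Im DF)^{\perp}$ and matching them under the hypothesized isomorphisms.

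The opening move is to invoke \cite[Theorem 2.3.3]{MT} for $F$, $D$ and $DF$: all three kernels and images are orthogonally complementable, and one gets the induced isomorphisms $F_{1} := F|_{(\ker F)^{\perp}} : (\ker F)^{\perp} \to Im F$ and $D_{1} := D|_{(\ker D)^{\perp}} : (\ker D)^{\perp} \to Im D$. With this matrix-style decomposition in place, $\ker DF$ reads off by direct inspection: an element $x = x_{1} + x_{0}$ with $x_{1} \in (\ker F)^{\perp}$ and $x_{0} \in \ker F$ lies in $\ker DF$ if and only if $F_{1} x_{1} \in \ker D \cap Im F$, giving the orthogonal sum $\ker DF = \ker F \oplus F_{1}^{-1}(\ker D \cap Im F)$ in $\mathcal{A}$, hence $\ker DF \cong \ker F \oplus (\ker D \cap Im F)$ as Hilbert modules.

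Next I would apply the same formula to $(DF)^{*} = F^{*} D^{*}$: closed image is preserved by adjoints, and the isomorphism hypotheses are symmetric under passage to the adjoint, since $\ker F^{*} = (Im F)^{\perp}$, $(Im F^{*})^{\perp} = \ker F$, and analogously for $D$. This yields
\[
(Im DF)^{\perp} = \ker (DF)^{*} \cong (Im D)^{\perp} \oplus ((Im F)^{\perp} \cap (\ker D)^{\perp}).
\]

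Finally I would combine these two displays with $\ker F \cong (Im F)^{\perp}$ and $\ker D \cong (Im D)^{\perp}$ to manufacture the desired isomorphism. This is the main obstacle, because naive cancellation of isomorphic summands is not available in the category of Hilbert $C^{*}$-modules. The route around it is to exploit the closed-image hypothesis on $DF$: applying \cite[Theorem 2.3.3]{MT} to $P_{(\ker D)^{\perp}}|_{Im F}$ supplies the orthogonal splittings $Im F = (\ker D \cap Im F) \oplus W$ and $(\ker D)^{\perp} = P_{(\ker D)^{\perp}}(Im F) \oplus ((Im F)^{\perp} \cap (\ker D)^{\perp})$ with $P_{(\ker D)^{\perp}}|_{W}$ furnishing an isomorphism $W \cong P_{(\ker D)^{\perp}}(Im F)$. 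This partial isometry, together with the hypothesized isomorphisms $u_{F} : \ker F \to (Im F)^{\perp}$ and $u_{D} : \ker D \to (Im D)^{\perp}$, can be assembled into an explicit $\mathcal{A}$-linear unitary establishing $\ker DF \cong (Im DF)^{\perp}$, paralleling the construction in \cite[Proposition 5.1.3]{Ths}.
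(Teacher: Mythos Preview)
Your approach is genuinely different from the paper's. The paper does not reprove the proposition at all: it simply observes that Lemma~\ref{related} converts ``$F$ has closed image'' into ``$F$ is invertible up to $(P_{\ker F},P_{(Im F)^{\perp}})$ with $(I-Q)F(I-P)=F$'', while \cite[Lemma 2.1]{IS10} converts ``$\ker F\cong (Im F)^{\perp}$'' into ``$P_{\ker F}\sim P_{(Im F)^{\perp}}$''. Under this dictionary the proposition becomes verbatim Proposition~\ref{R8 p2.23}, already established in \cite{BJMA}. So the paper's argument is a one-line reduction, whereas you attempt a direct module-theoretic computation in the spirit of \DJ{}or\dj{}evi\'{c}.

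Your computations of $\ker DF\cong \ker F\oplus(\ker D\cap Im F)$ and $(Im DF)^{\perp}\cong (Im D)^{\perp}\oplus\bigl((Im F)^{\perp}\cap(\ker D)^{\perp}\bigr)$ are correct, as is the observation that closedness of $Im\,DF$ forces $P_{(\ker D)^{\perp}}|_{Im F}$ to have closed range. The gap is in the final assembly. The splittings you extract are of $Im F$ and $(\ker D)^{\perp}$; from these you only obtain that $(Im F)^{\perp}\oplus S$ and $\ker D\oplus V$ are both complements in $\mathcal{A}$ of isomorphic submodules ($W$ and $P_{(\ker D)^{\perp}}(Im F)$ respectively), and complements of isomorphic submodules need not be isomorphic---precisely the cancellation problem you flagged. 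What actually closes the argument is the \emph{dual} splitting: applying \cite[Theorem 2.3.3]{MT} to $P_{(Im F)^{\perp}}|_{\ker D}$ yields $\ker D=S\oplus W'$ and $(Im F)^{\perp}=P_{(Im F)^{\perp}}(\ker D)\oplus V$ with $W'\cong P_{(Im F)^{\perp}}(\ker D)$, and then the chain
\[
\ker F\oplus S\;\cong\;(Im F)^{\perp}\oplus S\;=\;P_{(Im F)^{\perp}}(\ker D)\oplus V\oplus S\;\cong\;W'\oplus S\oplus V\;=\;\ker D\oplus V\;\cong\;(Im D)^{\perp}\oplus V
\]
goes through without cancellation. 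You would still need to check that $P_{(Im F)^{\perp}}|_{\ker D}$ has closed range (this does follow from closedness of $P_{(\ker D)^{\perp}}(Im F)$, but it requires an extra argument in the module setting). Finally, your appeal to ``the construction in \cite[Proposition 5.1.3]{Ths}'' is a reference to the proof of the very statement under consideration, so it does not supply the missing step independently.
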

From Lemma \ref{related} and \cite[Lemma 2.1]{IS10} it follows that this proposition is in fact equivalent to Proposition \ref{R8 p2.23}.

\bibliographystyle{amsplain}

\begin{thebibliography}{99}

	
	\bibitem{AH} T. Aghasizadeh, S. Hejazian, \textit{Maps preserving semi-Fredholm operators on Hilbert C*-modules}, J. Math. Anal. Appl. \textbf{354}(2009), 625-629.
	
	\bibitem{AP1} P. Aiena, \textit{Fredholm and Local Spectral Theory, with Applications to Multipliers}, Kluwer 2004, ISBN 978-1-4020-2525-9
	
	\bibitem{AP2} P. Aiena, \textit{Fredholm and Local spectral Theory II, Lecture Notes in Mathematics}, 2235, 2018, ISBN 978-3-030-02266-2
	
	\bibitem{BR} M. Breuer,  \textit{Fredholm theories in von Neumann algebras. I}, Math. Ann. \textbf{178}, 243–254 (1968). https://doi.org/10.1007/BF01350663
	
	\bibitem{BR2} M. Breuer,  \textit{Fredholm theories in von Neumann algebras. II}, Math. Ann. \textbf{180}, 313–325 (1969). https://doi.org/10.1007/BF01351884
	
	\bibitem{DDj2} D. S. \DJ{}or\dj{}evi\'{c}, \textit{On generalized Weyl operators}, Proceedings of the American Mathematical Society, Volume 130, Number \textbf{1}, Pages 81 ll4, s ooo2-9939(01)0608r-6, April 26,2OO1
	
	\bibitem{F} E. I. Fredholm, \textit{Sur une classe d'equations fontionnelles}, Acta Math. \textbf{27} (1903), 365--390.
	
	\bibitem{IS1} S. Ivkovi\'{c}, \textit{Semi-Fredholm theory on Hilbert C*-modules,} Banach Journal of Mathematical Analysis, Vol. \textbf{13} (2019) no. 4 2019, 989-1016 doi:10.1215/17358787-2019-0022. https://projecteuclid.org/euclid.bjma/1570608171
	
	\bibitem{IS3} S. Ivkovi\'{c}, \textit{On operators with closed range and semi-Fredholm operators over W*-algebras}, Russ. J. Math. Phys. 27, 4860 (2020)  http://link.springer.com/article/10.1134/S1061920820010057
	
	\bibitem{IS5} S. Ivkovi\'{c}, \textit{On various generalizations of semi-A-Fredholm operators}, Complex Anal. Oper. Theory \textbf{14}, 41 (2020). https://doi.org/10.1007/s11785-020-00995-3 
	
	\bibitem{IS10} S. Ivkovi\'{c}, \textit{On New Approach to Fredholm Theory in unital C*-Algebras}, https://doi.org/10.48550/arXiv.2306.01133  
	
	\bibitem{Ths}
	 S. Ivkovi\'{c}, \textit{Semi-Fredholm operators on Hilbert C*-modules}, Doctoral dissertation, Faculty of Mathematics, University of Belgrade  (2022),\\
	$www.matf.bg.ac.rs/files/Stefan_Ivkovic_Doktorska_disertacija.pdf$
	
	\bibitem{FIL} S. Ivkovi\'{c}, \textit{On non-adjointable semi-C*-Fredholm and semi-C*-Weyl operators}, FILOMAT (2023) vol. 37 no. 17, 5523–5539 
	https://doi.org/10.2298/FIL2317523I
	
	\bibitem{BJMA} S. Ivkovi\'{c}, \textit{Semi-Fredholm theory in C* -algebras}, Banach J. Math. Anal. \textbf{17}, 51 (2023). \\
      https://doi.org/10.1007/s43037-023-00277-y
      
   	\bibitem{KL} D. J. Ke\v{c}ki\'{c}, Z.  Lazovi\'{c},  Fredholm operators on C*-algebras. ActaSci.Math. 83, 629–655 (2017). https://doi.org/10.14232/actasm-015-526-5
	
    \bibitem{MT} V. M. Manuilov, E. V. Troitsky, \textit{Hilbert C*-modules}, In: Translations of Mathematical Monographs. 226, American Mathematical Society, Providence, RI, 2005.
	
	\bibitem{MF} A. S. Mishchenko, A.T. Fomenko, \textit{The index of eliptic operators over C*-algebras}, Izv. Akad. Nauk SSSR Ser. Mat. \textbf{43} (1979), 831--859; English transl., Math. USSR-Izv.\textbf{15} (1980) 87--112.
	
	
	
	
	
	
	
	
	
	
	
\end{thebibliography}

\end{document}